\documentclass[10pt]{amsart}
\usepackage{amsthm,hyperref}
\setlength{\textwidth}{12.5cm}
\setlength{\textheight}{18.5cm}
\setlength{\parindent}{16pt}

\usepackage{amsmath}
\usepackage{amsfonts}
\usepackage{amssymb}
\usepackage[all]{xy}
\xyoption{line}
\usepackage{enumitem}
\usepackage{graphicx}
\usepackage{amssymb,amsmath,amsfonts,amscd,euscript}
\usepackage{tikz}
\usetikzlibrary{matrix,arrows}
\usetikzlibrary{patterns}
\usetikzlibrary{shapes,decorations,shadows}

\newcommand{\QQ}{\mathcal{Q}}
\newcommand{\A}{\mathcal{A}}
\newcommand{\al}{\alpha}
\newcommand{\ee}{\varepsilon}
\DeclareMathOperator{\Rep}{Rep}
\newcommand{\degg}{\leq_{\deg}}

\DeclareMathOperator{\GL}{GL}

\DeclareMathOperator{\SO}{SO}
\DeclareMathOperator{\B}{B}

\DeclareMathOperator{\G}{G}
\DeclareMathOperator{\OO}{O}
\DeclareMathOperator{\SP}{SP}

\DeclareMathOperator{\Hom}{Hom}
\DeclareMathOperator{\Lie}{Lie}
\DeclareMathOperator{\Ext}{Ext}

\newcommand{\dimv}{{\bf dim}\,}
\newcommand{\form}{\langle-,-\rangle}

\numberwithin{equation}{section}
\newtheorem{theorem}{Theorem}[section]
\newtheorem{lemma}[theorem]{Lemma}
\newtheorem{proposition}[theorem]{Proposition}
\newtheorem{corollary}[theorem]{Corollary}
\newtheorem{conjecture}[theorem]{Conjecture}
\theoremstyle{definition}
\newtheorem{remark}[theorem]{Remark}
\newtheorem{definition}[theorem]{Defintion}
\newtheorem{example}[theorem]{Example}

\newtheorem{question}[theorem]{Question}

\begin{document}

\title[Symmetric degenerations are not induced]{Symmetric degenerations are not in general induced by type A degenerations}
\author{Magdalena Boos}
\address{Magdalena Boos. Ruhr University Bochum, Faculty of Mathematics,  44780 Bochum, (Germany).} 
\email{magdalena.boos-math@rub.de}
\author{Giovanni Cerulli Irelli}
\address{Giovanni Cerulli Irelli. Sapienza-Universit\`a di Roma. Department S.B.A.I.. Via Scarpa 10, 00164, Roma (Italy)}
\email{giovanni.cerulliirelli@uniroma1.it}
\keywords{ quivers with self-duality;  degenerations of quiver representations; symplectic and orthogonal vector spaces}
\subjclass{14L30; 16Gxx; 17B08}

\begin{abstract}
We consider a symmetric quiver with relations. Its (symmetric) representations of a fixed symmetric dimension vector are encoded in the (symmetric) representation varieties. The orbits by a (symmetric) base change group action are the isomorphism classes of (symmetric) representations. The symmetric orbits are induced by simply restricting the non-symmetric orbits. However, when it comes to orbit closure relations, it is so far an open question under which assumptions they are induced. In connection with Borel orbits of $2$-nilpotent matrices of classical Lie algebras, we describe an explicit example of a quiver of finite representation type for which orbit closure relations are induced in types B and C, but not in type D. 
\end{abstract}
\maketitle

\section{Introduction}\label{Sec:Intro}

Let $\A=\mathrm{k}\QQ/I$ be a symmetric quiver algebra over the field $\mathrm{k}$ of complex numbers. We fix a $\QQ_0$-graded $\mathrm{k}$-vector space $V$ and denote the representation variety of representations with underlying vector space $V$ by $R(\A,V)$. Inside of $R(\A,V)$ there is a subvariety $R(\A,V)^{\form, \ee}$ of so-called $\ee$-representations; here $\ee$ is a sign and $\form$ is a non-degenerate bilinear form on $V$ (see Subsection~\ref{Subsec:SymmRep} for the definitions). An $\ee$-representation is a symmetric representation which has an orthogonal or a symplectic structure. There are natural (symmetric) base change actions on these varieties; their orbits correspond to isomorphism classes of (symmetric) representations. It is natural to ask, whether the orbits and their closures can be translated easily between the two group actions. Concerning the orbits, it is known (\cite[Theorem~2.5]{BCI}, \cite[Theorem~2.6]{DW}, \cite[Section~2.1]{MWZ}) that a symmetric orbit is  induced by a single non-symmetric orbit, i.e. the restriction of an orbit $\mathcal{O}\subseteq R(\A,V)$ to $R(\A,V)^{\form, \ee}$ is a symmetric orbit. The question under which assumptions the orbit closure relations are induced by restricting in the same way, is still open. In this article, we provide a negative answer to this question by producing an  example of a quiver algebra of finite representation type, for which symmetric orbit closures are not induced.

This work is heavily based on our preceding article \cite{BCI} where many details on the symmetric representation theory of a symmetric quiver algebra can be found. In particular, in said article we prove that for $\QQ$ a Dynkin quiver with self-duality, orbit closure relations are induced.

We structure this article as follows: The setup of our Main Question \ref{Question} is explained in Section \ref{Sec:Setup} where aforesaid  question is posed.  At the same time we recall some general knowledge on algebras with self-dualities. In Section \ref{Sec:orders}, we define several partial orders which embed our Main Question into a representation-theoretical and homological frame. Our counterexample is described in Section \ref{Sec:Seesaw} where we look at the so-called Seesaw algebra and define particular symmetric representations which degenerate in type A, but not in type D. In Section~\ref{Rem:LieTh} we discuss the relationship between the Seesaw algebra and the Borel orbits of $2$-nilpotent elements in classical Lie algebras. In said section we also provide another proof of our counterexample in the language of \cite{GMP}. We end the paper by posing conjectures which are likely to hold true from our current perspective on the topic.

\section{Setup}\label{Sec:Setup}

Let $\mathrm{k}=\textbf{C}$ be the field of complex numbers and let  $\QQ=(\QQ_0,\QQ_1,s,t)$ be a  finite \emph{quiver}, that is,  an oriented graph with a finite set of \emph{vertices} $\QQ_0$, a finite set of  \emph{edges} $\QQ_1$ and two maps $s,t:\QQ_1\rightarrow \QQ_0$ which provide the orientation $\alpha: s(\alpha) \rightarrow t(\alpha)$ of the edges.  Let us consider the elements of $\QQ_1$ as arrows.  A sequence of arrows $\omega=\alpha_s \cdots\alpha_1$ is called a \emph{path} in $\QQ$ whenever $t(\alpha_{i})=s(\alpha_{i+1})$ for all $i$; we formally include a path $\ee_i:i \rightarrow i$ of length zero for each $i\in \QQ_0$. The \emph{path algebra} $\mathrm{k}\QQ$ of $\QQ$ is the $\mathrm{k}$-algebra spanned as a $\mathrm{k}$-vector space by the set of all paths in $\QQ$ together with the concatenation of paths as multiplication. Let $R\subseteq \mathrm{k}\QQ$ be the $2$-sided ideal generated by all arrows in $\QQ_1$; it is called the \emph{arrow ideal}. Then every ideal $I\subseteq \mathrm{k}\QQ$  which determines an integer $s$ with $R^s\subseteq I\subseteq R^2$  is called \emph{admissible}. If $I$ is admissible, then the quotient algebra $\A:=\mathrm{k}\QQ/I$ is a finite-dimensional and associative \emph{quiver algebra} \cite{ASS}. We denote by $\Rep(\A)$ the category of finite-dimensional $\A$-modules.

Now assume that $\QQ$ comes with a symmetry as defined in \cite{DW}, that is, we consider a tuple $(\QQ,\sigma)$ where $\sigma:Q\rightarrow Q^{op}$ is an involutive bijection of $\QQ_0$ which induces an arrow-reversing involution of $\QQ_1$. The pair  $(\QQ,\sigma)$  is called a \emph{symmetric quiver}. Assume that an admissible ideal $I\subset \mathrm{k}\QQ$ fulfills $\sigma(I)= I$; then the pair $(\mathrm{k}\QQ/I,\sigma)$ is called a \emph{symmetric quiver algebra}.

\subsection{Quiver representations}

Let $\A=\mathrm{k}\QQ/I$ be a quiver algebra. Let $V=\oplus_{i\in \QQ_0}V_i$ be a finite dimensional $\QQ_0$-graded vector space of graded dimension $\mathbf{d}=\dimv V=(\dim V_i)_{i\in \QQ_0}$. We denote by $R(\A,V)$ the variety of $\A$-representations having $V$ as underlying vector space, that is, its elements are collections $f=(f_\alpha:V_{s(\alpha)}\rightarrow V_{t(\alpha)})_{\alpha\in \QQ_1}$ of linear maps such that $f_{\pi}=0$ for every $\pi\in I$. Thus,
\[
R(\A,V)\subseteq R(\mathrm{k}\QQ,V) := \!\!\!\!\bigoplus_{\alpha:i\rightarrow j\in \QQ_1}\Hom_\mathrm{k}(V_i,V_j).
\]
The vector  $\mathbf{d}$ is called the \emph{dimension vector} of these representations. We denote by $\GL^\bullet(V):=\prod_{i\in \QQ_0} \textrm{GL}(V_i)$ be the group of graded automorphisms of $V$, then $\GL^\bullet(V)$ acts on $R(\A,V)$ by change of basis: given $g=(g_i)_{i\in\QQ_0}\in \GL^\bullet(V)$ and $M=(M_\alpha)_{\alpha\in \QQ_1}\in R(\A,V)$ the representation $g\cdot M$ is defined by $(g\cdot M)_\alpha=g_{t(\alpha)}\circ M_\alpha\circ g_{s(\alpha)}^{-1}$. Two representations  are isomorphic if they belong to the same $\GL^\bullet(V)$-orbit.

Let $M\in R(\A,V)$, let $B_i$ be a $\mathrm{k}$-basis of $V_i$ for every $i\in \QQ_0$ and let $B$ be the disjoint union of these sets $B_i$. We denote by $\Gamma(M):=\Gamma(M,B)$ the \emph{coefficient quiver} of $M$ with respect to the basis $B$ \cite{Ringel}; this is the quiver whose vertices are the elements of
$B$ and the arrows are defined naturally as follows: for each arrow $\alpha\in\QQ_1$ and every element $b\in B_{s(\alpha)}$ we have \[M_{\alpha}(b)=\sum_{c\in B_{t(\alpha)}}\lambda_{b,c}^{\alpha}c \]
with $\lambda_{b,c}^{\alpha}\in \mathrm{k}$, then for each $\lambda_{b,c}^{\alpha}\neq 0$ we draw an arrow $b\rightarrow c$ with label $\alpha$ in $\Gamma(M)$ \cite{Ringel}. Thus, the coefficient quiver reflects the coefficients corresponding to the representation $M$ with respect to the chosen basis $B$ and will help us to depict representations in a nice way in the remainder of the article. In case there are no multiple arrows between two vertices, we label the arrows of the coefficient quiver with the actual value of $\lambda_{b,c}^{\alpha}$.

We include a basic example in order to display the ideas behind our setup. We will come back to this example throughout this section.
\begin{example}\label{Ex:Jordan}
Let $\QQ$ be the one-loop quiver, that is, $\QQ_0=\{x\}$ and $\QQ_1=\{\alpha:x\rightarrow x\}$, let $V=\mathrm{k}^n$, and consider the admissible ideal $I=(\alpha^n)\subseteq \mathrm{k}\QQ$. Then $R(\A,V)=\mathcal{N}=\{N\in\mathrm{k}^{n\times n}\mid N^n=0\}$ equals the nilpotent cone and $\GL^\bullet(V)=\GL(\mathrm{k}^n)$. Thus, the $\GL^\bullet(V)$-action on $R(\A,V)$ is the usual conjugation action, its orbits are described by the Jordan canonical form \cite{Jor}, or by partitions, that is, combinatorial objects named Young diagrams. The closure relations are known by Gerstenhaber \cite{Ger} and are given by box dropping of Young diagrams.
\end{example}
\subsection{Symmetric quiver representations}\label{Subsec:SymmRep}

Let $\A=\mathrm{k}\QQ/I$ be a symmetric quiver algebra with respect to an anti-involution $\sigma$. 
The anti-involution $\sigma$ can be extended to an isomorphism $\sigma:\A\rightarrow \A^{op}$ of $\A$ to its opposite algebra. This isomorphism induces an equivalence $\sigma:\Rep(\A)\rightarrow \Rep(\A^{op})$ of the representation categories;  by composing with the standard $\mathrm{k}$-duality $D=\Hom(-,\mathrm{k})$ we get a self-duality $\{\}^\ast:\Rep(\A)\rightarrow \Rep(\A)$ on $\Rep(\A)$. With abuse of notation, for a vector space $V$ we denote by 
$V^\ast=\Hom(V,\mathrm{k})$ its linear dual and for a linear map $f:U\rightarrow V$ we denote by $f^\ast:V^\ast \rightarrow U^\ast$ its linear dual  defined by $f^\ast(h)(u)=h(f(u))$ for every $h\in V^\ast$ and $u\in U$. 
For a $\QQ_0$-graded vector space $V=\oplus_{i\in \QQ_0} V_i$,  its \emph{twisted dual} $\nabla V=V^\ast$ is the $\QQ_0$-graded vector space whose $i$-th component is $(\nabla V)_i=(V_{\sigma(i)})^\ast$.  Thus, given a representation $M=(M_\alpha)\in R(\A,V)$ its dual is the representation $M^\ast\in R(\A,\nabla V)$ given by $(M^\ast)_\alpha=M^\ast_{\sigma(\alpha)}$. For our purposes it is convenient to slightly modify this self-duality as follows.
\begin{definition}
Let $\nabla:\Rep(\A)\rightarrow \Rep(\A)$ be the functor $\nabla=-\{\}^\ast$. Thus
\begin{itemize}
\item $\nabla M=-M^\ast$ for every object $M\in \Rep(\A)$;
\item $\nabla h=h^\ast$ for every morphism $h$.
\end{itemize}
\end{definition}
 
Let us fix $\ee\in\{\pm 1 \}$ and let $\langle-,-\rangle:V\times V\rightarrow \mathrm{k}$ be a non-degenerate bilinear form which fulfills two conditions: 
\begin{enumerate}
\item the form $\form$ is \emph{compatible with $\sigma$}, i.e. $\langle-,-\rangle|_{V_i\times V_j}=0$ if $j\neq \sigma(i)$; 
\item the form $\form$ is an \emph{$\ee$-form}: i.e. $\langle v,w\rangle=\ee\langle w,v\rangle$ for every $v,w\in V$.
\end{enumerate}
The pair $(V,\form)$ is called an $\ee$-quadratic space for $(\A,\sigma)$. We highlight some obvious properties of $(V,\form)$:
\begin{itemize}
\item The dimension vector of $V$ is \emph{$\sigma$-symmetric}, i.e. $\mathbf{d}_{\sigma(i)}=\mathbf{d}_i$ for every $i\in\QQ_0$.  
\item There is an isomorphism of $\QQ_0$-graded vector spaces $\Psi:V\rightarrow \nabla V$ given by $v\mapsto \langle v,-\rangle$. We freely identify $V$ and $\nabla V$ by $\Psi$.
\item Every endomorphism $f$ of $V$ has a unique \emph{adjoint} $f^\star$ with respect to $\form$ defined by the condition $\langle v,f(w)\rangle=\langle f^\star(v),w\rangle$, for all $v,w,\in V$. 
\item Every representation $M\in R(\A,V)$ can be naturally seen as an endomorphism of $V$ and we denote by $M^\star$ its adjoint.
\end{itemize}

We denote by $\G(V,\form)=\{g\in \GL(V)| g=(g^{\star})^{-1}\}$ the group of isometries of $(V,\langle-,-\rangle)$. Thus $\G(V,\form)=\OO_d$ is the orthogonal group if $\ee=1$ and it is the symplectic group $\SP_d$  if $\ee=-1$,
where $d$ is the dimension of $V$.

Following \cite{DW} we say that $M\in R(\A,V)$ is an \emph{$\ee$-representation} of $(\A,\sigma)$ with respect to $(V,\form)$ if 
\begin{enumerate}[resume]
\item $M^\star+M=0$.
\end{enumerate}
In other words, $M$ is an $\ee$-representation if, interpreted as an endomorphism of $V$, it lies in the Lie algebra of  $\G(V,\form)$. A $+1$-representation is called \emph{orthogonal} and a $-1$-representation is called \emph{symplectic}.  By identifying $V$ and $\nabla V$ via $\Psi$, the equation $M^\star+M=0$ is rewritten as $\nabla M=M$. We collect all $\ee$-representations in a variety $R(\A,V)^{\form, \ee}=\{M\in R(\A,V)|\, \nabla M=M\}$ and denote by   
$\G^\bullet(V,\form):=\G(V,\form)\cap \GL^\bullet(V)$ the group of graded isometries of $(V,\form)$.  Then the action of $\GL^\bullet(V)$ on $R(\A,V)$  induces an action of $\G^\bullet(V,\form)$ on $R(\A,V)^{\form, \ee}$ by change of basis (\cite{DW,BCI}). One first question which suggests itself is whether or not 
\[\G^\bullet(V,\form)\cdot M = \GL^\bullet(V)\cdot M \cap R(\A,V)^{\form, \ee}\] 
holds true for every $M\in R(\A,V)^{\form, \ee}$, that is, whether the orbits of the smaller group are induced by the orbits of the bigger group. This question is answered positively by Derksen and  Weyman in \cite{DW} and with different techniques in \cite{BCI}. The main question which we address in this article follows immediately:

\begin{question}\label{Question}
 Is it true that 
\[\overline{\G^\bullet(V,\form)\cdot M} = \overline{\GL^\bullet(V)\cdot M} \cap R(\A,V)^{\form, \ee}\] for every $M\in R(\A,V)^{\form, \ee}$?

\end{question}

Main Question \ref{Question} is answered positively in \cite{BCI} for Dynkin quivers. Its answer is particularly interesting when the algebra $\A$ is \emph{representation-finite}, that is, in case there is only a finite number of $\GL^\bullet(V)$-orbits in $R(\A,V)$, as in Example \ref{Ex:Jordan}. Our aim in this article is to give a counterexample of a representation-finite algebra for which the answer to Main Question \ref{Question} is negative. This is indeed unexpected, since our example is closely related to the fundamental Example \ref{Ex:Jordan1} which does not give a counterexample.

\begin{example}\label{Ex:Jordan1}
In case of Example \ref{Ex:Jordan}, we fix $\ee$ to be $+1$ or $-1$ (note that $n$ is supposed to be even in the latter case). Let $J_k$ be the $k\times k$-anti-diagonal matrix with every entry on the anti-diagonal being one and every other entry being zero. The non-degenerate bilinear form $\form:V\times V\rightarrow \mathrm{k}$ given by the matrix $J_n$ if $\ee=1$ and by 
\[F=\left[\begin{array}{cc}0&J_l\\- J_l&0\end{array}\right]\]
if $\ee=-1$ fulfills conditions (1) and (2). Then $\G^\bullet(V,\form)=\OO_n$ if  $\ee=1$ and $\G^\bullet(V,\form)=\SP_n$ if $\ee=-1$ and the $\G^\bullet(V,\form)$-action on $R(\A,V)^{\form, \ee}=\mathcal{N}\cap \Lie G$ (here $\mathcal{N}$ denotes the nilpotent cone as in Example~\ref{Ex:Jordan}) is given by orthogonal/symplectic conjugation. The orbits of the latter are classified by Springer and Steinberg by so-called $\ee$-partitions and their closures are known by Hesselink (these results are e.g. described by Kraft and Procesi in \cite{KP}).  Main Question \ref{Question} is answered positively. 
\end{example}

\subsection{Motivation}
Example \ref{Ex:Jordan} shows that - in addition to being interesting from a quiver representation-theoretic point of view - the answer of Main Question \ref{Question} has further applications to algebraic Lie Theory. This will be worked out in more detail in Section~\ref{Rem:LieTh}.

\begin{remark}\label{rem:MWZ}
Our setup fits into a more general context described by Magyar, Weyman and Zelevinsky in \cite{MWZ}. In fact,  given a complex algebraic variety $X$ together with an action of a group $\G$ and two involutions $\rho:\G\rightarrow \G$ and $\Delta: X\rightarrow X$ such that $\Delta(g\cdot \Delta x)=g^\rho\cdot x$, we denote the fixed point sets by $\G^\rho\subset \G$ and $X^\Delta\subset X$. Assume that
\begin{enumerate}[label=(\arabic*)]
\item the group $\G$ is a subgroup of the group of invertible elements $E^\times$ of a finite-dimensional associative algebra $E$ over $\mathrm{k}$;
\item the anti-involution of $\G$ given by $g\mapsto g^\ast:=(g^\rho)^{-1}$ extends to a $\mathrm{k}$--linear anti-involution $f\mapsto f^\ast$ on the algebra $E$;
\item for every fixed point $x\in X^\Delta$, its stabilizer $H=\textrm{Stab}_{\G}(x)$ is the group of invertible elements of its linear span $\textrm{Span}_\mathrm{k}(H)\subset E$.
\end{enumerate}
Then $\G x\cap X^\Delta=\G^\rho x$ holds true for all $x\in X^\Delta$ by \cite[Section~2.1]{MWZ}.
 The natural subsequent (and open) question  is
\begin{equation}\label{Eq:MWZ}
 "\mathrm{Is~ it~ true ~that~} \overline{\G x}\cap X^\Delta=\overline{\G^\rho x} \textrm{~for~ every~} x\in X^\Delta\textrm{?}"
\end{equation}
As described in \cite[Subsection 2.4]{BCI}, $R(\A,V)$ and $R(\A,V)^{\form, \ee}$ can be realized as $X$ and $X^\Delta$; and $\GL^\bullet(V)$ and  $\G^\bullet(V,\form)$ can be realized as $G$ and $G^\rho$. Thus, our counterexample on Main Question \ref{Question} is also  a counterexample for  (\ref{Eq:MWZ}).
\end{remark}

\section{Ext-, deg- and hom-order}\label{Sec:orders}

Let $\A$ be a quiver algebra, let $\mathbf{d}\in \mathbf{Z}_{\geq0}^{\QQ_0}$ be a dimension vector and let $V$ be a $\QQ_0$-graded complex vector space of dimension vector $\mathbf{d}$.

Let  $M,N\in R(\A,V)$. We denote  $[M,N]:=\textrm{dim Hom}_{\A}(M,N)$ and $[M,N]^1:=\textrm{dim Ext}^1_{\A}(M,N)$ and define three partial orders on  $R(\A,V)$ which were first described by Abeasis-Del Fra for quivers of Dynkin type $A$ \cite{AbeasisDelFraEquioriented, AbeasisDelFra}, before being generalized to quiver algebras by Riedtmann \cite{Riedtmann}, Bongartz \cite{Bongartz} and Zwara \cite{Zwara}.

\begin{itemize}
\item
The \emph{degeneration order} $\degg$ is defined by 
\[
M\degg N :\Longleftrightarrow N\in \overline{\GL^\bullet(V)\cdot M}
\]
\item The \emph{Hom-order} $\leq_{\Hom}$  is defined by 
\[
M\leq_{\Hom} N  :\Longleftrightarrow  [M,E]\leq [N,E] \textrm{ for every indecomposable }E.
\]
\item 
The \emph{Ext-order} $\leq_{\Ext}$ is defined by
\[\begin{array}{ccc}
&& \exists M(1),\cdots, M(k)\in R(\A,V) \textrm{ and short exact }\\
M\leq_{\Ext} N &:\Longleftrightarrow&\textrm{ sequences }0\rightarrow U(i)\rightarrow M(i-1)\rightarrow V(i)\rightarrow 0~~~ (\forall i)\\
& &\textrm{ such that }M(1)=M, M(k)=N, \, M(i)\simeq U(i)\oplus V(i).
\end{array}\]
\end{itemize} 

It is known  by \cite[Lemma~1.1]{Bongartz} (first implication) and \cite[Proposition~2.1]{Riedtmann} (second implication) that
\[
\xymatrix{
M\leq_{\Ext}N\ar@{=>}[r]&M\degg N\ar@{=>}[r]&M\leq_{\Hom}N.
}
\]

If $\A$ is a representation-finite algebra, then Zwara \cite[Corollary of Theorem 1]{Zwara} shows
\[\xymatrix{
M\degg N\ar@{<=>}[r]&M\leq_{\Hom}N.
} \]
If furthermore all indecomposables are rigid, i.e. $[E,E]^1=0$ for all indecomposables $E$, then all three orders coincide \cite[Theorem~2]{Zwara}. In particular, they are equivalent for Dynkin quivers. Note that the result on Dynkin quivers also follows from work of Bongartz; he shows that all three partial orders coincide for representation-directed algebras \cite[Proposition 3.2,Corollary 4.2]{Bongartz}.

Following \cite{BCI}, we introduce symmetric versions of $\degg$ and $\leq_{\Ext}$ now.  Thus, we assume $\A$ to be a symmetric quiver algebra, let $\ee$ be $+1$ or $-1$ and let $\form$ be a bilinear form as in the Section \ref{Sec:Setup}.  Then we consider the following partial orders on $R(\A,V)^{\form, \ee}$: let $M,N\in R(\A,V)^{\form, \ee}$.
\begin{itemize}
\item
The \emph{symmetric degeneration order} $\degg^\ee$  is defined by 
\[
M\degg^\ee N :\Longleftrightarrow N\in \overline{\GL^\bullet(V,\form) \cdot M.
}
\]
\item 
The \emph{symmetric Ext-order} $\leq^\ee_{\Ext}$ is defined by
\[\begin{array}{ccc}
&& \exists M(1),\cdots, M(k) \in R(\A,V)^{\form, \ee} \textrm{ and s.e.s.}\\M\leq^\ee_{\Ext} N &:\Longleftrightarrow&
 0\rightarrow U(i)\rightarrow M(i-1)\rightarrow V(i)\rightarrow 0~~~ (\forall i)~\textrm{such that }\\
&&M(1)=M, M(k)=N, U(i)\textrm{ is isotropic in }M(i-1),\\&&\textrm{ and } M(i)\simeq U(i)\oplus \nabla U(i)\oplus U(i)^{\perp}/U(i).
\end{array}\]
\end{itemize}
(Here $U(i)^\perp$ denotes the orthogonal subspace of $U(i)$ in $M(i)$.) 
It is known  by \cite[Corollary~3.3]{BCI} and by the fact that $\GL^\bullet(V,\form)\subseteq \GL^\bullet(V)$ is a subgroup that
\[
\xymatrix{
M\leq^\ee_{\Ext}N\ar@{=>}[r]&M\degg^\ee N\ar@{=>}[r]&M\degg N~(\ar@{=>}[r]&M\leq_{\Hom}N).
}
\]

\begin{question}\label{Question2}
Does $
\xymatrix{
\degg^\ee \ar@{<=>}[r]&\degg 
}
$
hold true on $R(\A,V)^{\form, \ee}$?
\end{question}

From our considerations before, it is clear that Main Question \ref{Question} and Main Question \ref{Question2} coincide; we can thus answer either of them.
\section{The Seesaw algebra}\label{Sec:Seesaw}

Let $n\in\{2l,2l+1\}$ be an integer and let  $\A=\A_n=\mathrm{k}\QQ/I$ be the symmetric quiver algebra given by the symmetric quiver
\[
\QQ:\;\xymatrix{
1\ar^{a_1}[r]&2\ar^{a_2}[r]&\cdots\ar^{a_{l-1}}[r]&l\ar^{a_l}[r]&\omega\ar_{\gamma=\gamma^\ast}@(ld,rd)\ar^{a_l^\ast}[r]&l^\ast\ar^{a_{l-1}^\ast}[r]&\cdots\ar^{a_2^\ast}[r]&2^\ast\ar^{a_1^\ast}[r]&1^\ast
}
\]
where $\sigma(i)=i^\ast$ for $i\in\QQ_0\cup\QQ_1$ and by the admissible ideal $I=(\gamma^2,a_l^\ast a_l)$. We call it the \emph{Seesaw algebra}.  We consider the symmetric dimension vector  
\begin{equation}\label{Eq:DefDimD}
\mathbf{d}:=(\mathbf{d}_i)_i= (1,2,\cdots,l-1,l,n,l,l-1,\cdots, 2,1)
\end{equation}

and a $\QQ_0$-graded vector space $V$ of graded dimension $\mathbf{d}$. Let $\form$ be a bilinear $\ee$-form on $V$ as in Section \ref{Sec:Setup}. In order to be able to work in coordinates (and to depict our representations nicely), let us fix a basis $\mathcal{B}_s=\{v_k^{(s)}\mid 1\leq k\leq i\}$ of each $V_s$ where $s\in \{i, i^\ast\}$ and  $\mathcal{B}_\omega:=\{v_k^{(\omega)}, v_{k}^{(\omega^\ast)}\mid 1\leq k\leq l\}$ or $\mathcal{B}_\omega:=\{v_k^{(\omega)}, v_{k}^{(\omega^\ast)}\mid 1\leq k\leq l\}\cup \{v\}$ of $V_{\omega}$, such that on the basis elements the form is zero unless
\[\langle v^{(i)}_k, v^{(i^\ast)}_{k} \rangle=1,~\langle v^{(i^\ast)}_k, v^{(i)}_{k} \rangle=\ee,~\langle v^{(\omega)}_k, v^{(\omega^\ast)}_{k} \rangle=1~\mathrm{or}~\langle v^{(\omega^\ast)}_{k}, v^{(\omega)}_{k} \rangle=\ee.\]

Let $M=(M_{\beta})_{\beta\in\QQ_1}$ and $N=(N_{\beta})_{\beta\in\QQ_1}$ be two representations in $R(\A,V)$.  Here, 
$M_{a_i}=N_{a_i}$,  are the standard embeddings into the first $i$ copies of $\mathrm{k}$, $M_{a_{i^\ast}}=N_{a_{i^\ast}} $ equals minus the standard projection of the last $i$ copies of $\mathrm{k}$ onto $\mathrm{k}^{i}$. Furthermore, $M_{\gamma}$  sends $v_1^{(\omega)}$ to $v_l^{(\omega^\ast)}$,  $v_l^{(\omega)}$ to $-\ee v_1^{(\omega^\ast)}$ and $N_{\gamma}$  sends $v_1^{(\omega)}$ to $v_l^{(\omega)}$,  $v_l^{(\omega^\ast)}$ to $-v_1^{(\omega^\ast)}$ and every other basis element is mapped to zero by $M_{\gamma}$ and $N_{\gamma}$.
We  depict them by their coefficient quivers for $n=2l=4$. 
\[\xymatrixrowsep{0.01pc}\xymatrixcolsep{1.4pc}
\xymatrix{
&v^{(1)}_1\ar^1[r]&v^{(2)}_1\ar^1[r]&v^{(\omega)}_1\ar@/^1.7pc/^1[dd]&&\\
\Gamma(M)=&&v^{(2)}_2\ar^(.36)1[r]&v^{(\omega)}_2\ar@/_2.1pc/_{-\ee}[dd]&&\\
&&&v^{(\omega^\ast)}_{2}\ar_(.54){-1}[r]&v^{(2^\ast)}_2&&\\
&&&v^{(\omega^\ast)}_{1}\ar_{-1}[r]&v^{(2^\ast)}_1\ar_{-1}[r]&v^{(1^\ast)}_1\\}
\]

\[\xymatrixrowsep{0.01pc}\xymatrixcolsep{1.4pc}
\xymatrix{
&v^{(1)}_1\ar^1[r]&v^{(2)}_1\ar^1[r]&v^{(\omega)}_1\ar@/^1.7pc/^1[d]&&\\
\Gamma(N)=&&v^{(2)}_2\ar^1[r]&v^{(\omega)}_2&&\\
&&&v^{(\omega^\ast)}_{2}\ar_{-1}[r]\ar@/_1.7pc/_{-1}[d]&v^{(2^\ast)}_2&\\
&&&v^{(\omega^\ast)}_{1}\ar_{-1}[r]&v^{(2^\ast)}_1\ar_{-1}[r]&v^{(1^\ast)}_1\\
}\]
Then  the relations  $M_{\gamma}^2=N_{\gamma}^2=0=\pi_l\circ\iota_l$  are fulfilled and  $M, N\in R(\A,V)^{\form, \ee}$. In order to approach our counterexample, we compute the stabilizer and orbit dimensions in the following lemma. For simplicity of notation, we put $\G:=\GL^\bullet(V)$, $R:=R(\A,V)$, $\G(\ee):=\G^\bullet(V,\form)$ and $R(\ee)=R(\A,V)^{\form, \ee}$. 

\begin{lemma}\label{Lem:Stabilizers}
\[\begin{tabular}{|c|c|c|c|}
\hline
&$\dim G $&$\dim\mathrm{Stab}_G(M)$& $\dim\mathrm{Stab}_G(N)$ \\\hline
Type A&$2(\sum_{i=1}^l i^2)+n^2$ &(2l-1)(l-2)+3 &(2l-1)(l-2)+4 \\\hline\hline
&$\dim G(\ee) $&$\dim\mathrm{Stab}_{G(\ee)}(M)$& $\dim\mathrm{Stab}_{G(\ee)}(N)$\\\hline
Type B&$\frac{1}{2}(\dim G-n) $&l(l-2)+2&l(l-2)+3 \\
Type C&$\frac{1}{2}(\dim G+n)$&l(l-2)+1 &l(l-2)+2\\ 
Type D&$\frac{1}{2}(\dim G-n)$&(l-1)(l-2)+2 &(l-1)(l-2)+2\\\hline
\end{tabular}\]
\end{lemma}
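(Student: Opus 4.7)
The plan is to compute each of the twelve entries of the table by reducing to $\Hom$-space computations in the endomorphism algebras of $M$ and $N$.

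The two group dimensions are routine. For $G = \GL^\bullet(V)$ the formula follows immediately from $\dim \GL^\bullet(V) = \sum_{i \in \QQ_0}(\dim V_i)^2$ and the prescribed dimension vector. For $\G^\bullet(V, \form)$, the $\sigma$-compatibility of $\form$ decouples the isometries at each $\sigma$-orbit $\{i, i^*\}$ of non-fixed vertices into a single factor $\GL(V_i)$ of dimension $i^2$ (the block at $i^*$ being forced to $(g_i^\star)^{-1}$), while at $\omega$ one obtains the full isometry group of the non-degenerate $\ee$-form $\form|_{V_\omega}$, namely $\OO_n$ if $\ee = 1$ and $\SP_n$ if $\ee = -1$. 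Plugging in $\dim \OO_n = n(n-1)/2$ and $\dim \SP_{2l} = l(2l+1)$ yields the asserted formulas in types B, C and D.

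For the type A stabilizers I use $\dim \Stab_G(M) = \dim \End_\A(M)$. The coefficient quivers (displayed for $l=2$ in the text) decompose $M$ and $N$ for arbitrary $l$ as $X_1 \oplus X_2 \oplus P$, where $P := \bigoplus_{k=2}^{l-1}(P_k^+ \oplus P_k^-)$ is the same ``middle'' piece in both representations (plus a simple summand $S_\omega$ at $\omega$ when $n = 2l+1$); the big summands $(X_1, X_2)$ are either $(M_1, M_2)$ or $(N_1, N_2)$, with $X_2 = \nabla X_1$ in both cases. The whole difference between $M$ and $N$ is concentrated in these big summands: $M_\gamma$ crosses between the ``positive'' and ``dual'' halves of $V_\omega$, while $N_\gamma$ loops within a single half. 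Each $[X, Y]$ can then be obtained by inspecting the coefficient quivers and solving the compatibility equations imposed by the arrows; most of them vanish for obvious support reasons. Summing the non-vanishing contributions produces the two stated formulas, and the single extra unit $\dim \End_\A(N) - \dim \End_\A(M) = 1$ comes from one additional morphism $N_2 \to N_1$ supported at $\omega$ that the loop structure of $N_\gamma$ makes possible.

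For the symmetric stabilizers I use the standard fact that the adjoint $\star$ induced by $\form$ is an anti-involution on $\End_\A(M)$ and that the Lie algebra of $\Stab_{\G^\bullet(V, \form)}(M)$ is its $-1$-eigenspace $\End_\A(M)^-$. Since $\nabla X_1 = X_2 \not\cong X_1$ and $\nabla P_k^+ = P_k^- \not\cong P_k^+$, the anti-involution permutes the Hom-blocks of the decomposition: it interchanges $[X, Y] \leftrightarrow [\nabla Y, \nabla X]$, so on every genuinely swapped pair of blocks each of the $\pm 1$-eigenspaces receives exactly half of the combined dimension. On the $\star$-stable blocks $[X, \nabla X]$ and $[\nabla X, X]$ (and on the self-dual $S_\omega$ when it is present) $\star$ acts as a scalar $\pm 1$ whose sign is dictated by $\ee$. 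Adding these contributions block by block gives the formulas in types B, C and D. The most delicate verification is in type D, where one must show that the extra one-dimensional morphism distinguishing $\End_\A(N)$ from $\End_\A(M)$ lies in the $\star$-fixed part $\End_\A(N)^+$, so that $\dim \End_\A(N)^- = \dim \End_\A(M)^-$; this collapse of the type D stabilizers (in contrast with types A, B, and C) is exactly what sets up the counterexample that is developed in the rest of the paper, and nailing down the sign on that one morphism is where I expect the main technical difficulty to lie.
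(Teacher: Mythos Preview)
Your approach via $\dim\Stab_G(M)=\dim\End_\A(M)$ and the $(-1)$--eigenspace of the adjoint anti--involution $\star$ on $\End_\A(M)$ for the symmetric stabilizers is sound, and it is in fact one of the three methods the paper explicitly points to: the paper's proof is itself only a sketch, saying that one may compute the stabilizer dimensions either by passing to Borel orbits of $2$--nilpotent matrices (Section~\ref{Rem:LieTh} and \eqref{Eq:StabilizersIso}), by computing endomorphism spaces as in \cite{BCIE}, or via Crawley--Boevey's description of maps between string modules \cite{CB}. So your route is not different from the paper's, only more explicit than what the paper writes down.

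Two small points. First, your statement that on the $\star$--stable blocks $[X,\nabla X]$ the involution ``acts as a scalar $\pm 1$'' is only literally true because each such Hom--space happens to be one--dimensional here; in general $\star$ is just an involution on such a block and you have to count both eigenspaces. Second, you correctly identify the one place where an actual computation is needed, namely the sign of $\star$ on the extra morphism $N_2\to N_1$ in type~$D$, but you leave it unverified. That sign check is genuinely the whole content of the type~$D$ row and cannot be skipped; the paper does not write it out either, but the alternative Borel--orbit viewpoint (Section~\ref{Rem:LieTh}) gives a clean way to see the equality $\dim\Stab_{G(1)}(M)=\dim\Stab_{G(1)}(N)$ directly: the outer automorphism $P_{ll^\ast}\in\OO(2l)\setminus\SO(2l)$ conjugates $M_\gamma$ to $N_\gamma$ while normalizing $B(1)$, so the two $B(1)$--orbits are isomorphic. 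If you want to stay purely in the endomorphism picture, you can finish your computation by tracking that the extra map corresponds to a $\star$--symmetric (not $\star$--antisymmetric) bilinear pairing, hence lands in $\End_\A(N)^+$ when $\ee=1$, as you anticipated.
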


\begin{proof}
The stabilizer dimension can e.g. be calculated by basic methods of linear algebra when going over to Borel-orbits of $2$-nilpotent matrices as explained in Section~\ref{Rem:LieTh}. Another option is a calculation of their endomorphism spaces \cite{BCIE} or of certain Crawley Boevey triples \cite{CB} (since $\A$ is a string algebra). 
\end{proof}
Note that orbit dimensions can then be read off, since $\dim G.X = \dim G - \dim \mathrm{Stab}_G(X)$ and  $\dim G(\ee).X = \dim G(\ee) - \dim \mathrm{Stab}_{G(\ee)}(X)$ for $X=M,N$.

Let us fix $n=2l$ for now, let $V=\oplus_{i\in\QQ_0}V_i=\oplus_{i\in\QQ_0}\mathrm{k}^{\mathbf{d}_i}$  and fix $\ee = 1$ (that is, we work in orthogonal type D). The following proposition gives the claimed counterexample. 
\begin{proposition}\label{Prop:Counterex} 
For $n=2l$, i.e. in type $D$, $M$ and $N$ gives a negative answer to Main Question~\ref{Question2}, which means
\begin{enumerate}
\item $N\in \overline{\G M}$, i.e. $M\degg N$.
\item $N\notin\overline{\G(1)\cdot M}$, i.e.  $M\nleq_{\deg}^\ee N$.
\end{enumerate}
\end{proposition}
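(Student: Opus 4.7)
The plan is to handle the two statements with different techniques: for (1) I will construct an explicit short exact sequence, while for (2) I will use a dimension count based on Lemma~\ref{Lem:Stabilizers}.

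For (1), decompose $M$ and $N$ into Krull--Schmidt summands (both are string modules over the special biserial Seesaw algebra). A direct inspection shows $M \cong M_1 \oplus M_2 \oplus R$ and $N \cong N_1 \oplus N_2 \oplus R$ with a common ``middle'' summand $R$ supported on the intermediate coordinates at $\omega$ and $l^\ast$ and at vertices $2, \ldots, l-1$ and $2^\ast, \ldots, (l-1)^\ast$; this reflects that $M_\gamma$ and $N_\gamma$ differ only on the extremal basis vectors $v_1^{(\omega)}, v_l^{(\omega)}, v_1^{(\omega^\ast)}, v_l^{(\omega^\ast)}$. The key is to exhibit an embedding $\phi : N_2 \hookrightarrow M_1 \oplus M_2$; for $l = 2$, with $n_1, \ldots, n_5$ the natural basis of $N_2$, one may take
\begin{align*}
&\phi(n_1) = v_2^{(2^\ast)}, \quad \phi(n_2) = -v_1^{(\omega)} + v_2^{(\omega)} + v_2^{(\omega^\ast)}, \quad \phi(n_3) = v_1^{(\omega^\ast)} + v_2^{(\omega^\ast)}, \\
&\phi(n_4) = v_1^{(2^\ast)} + v_2^{(2^\ast)}, \quad \phi(n_5) = v_1^{(1^\ast)},
\end{align*}
with an analogous formula for general $l$. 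Injectivity and arrow-compatibility are direct verifications, and a change of basis $u_1 = \bar v_1^{(\omega)}$, $u_2 = \bar v_1^{(\omega)} - \bar v_l^{(\omega)}$ at vertex $\omega$ identifies $(M_1 \oplus M_2)/\phi(N_2)$ with $N_1$. Direct-summing with the identity on $R$ yields a short exact sequence $0 \to N_2 \to M \to N_1 \oplus R \to 0$ with $N_2 \oplus N_1 \oplus R \cong N$, witnessing $M \leq_{\Ext} N$ and hence $M \degg N$ by the implication recalled in Section~\ref{Sec:orders}.

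For (2), by Lemma~\ref{Lem:Stabilizers} in type D we have $\dim \Stab_{\G(1)}(M) = \dim \Stab_{\G(1)}(N) = (l-1)(l-2)+2$, so the orbits $\G(1)\cdot M$ and $\G(1)\cdot N$ have equal dimension. If $N$ were in $\overline{\G(1)\cdot M}$, then $\G(1)\cdot N$, being a locally closed subvariety of the irreducible closure $\overline{\G(1)\cdot M}$ of the same dimension, would have to coincide with $\G(1)\cdot M$; in particular $M$ and $N$ would lie in the same $\GL^\bullet(V)$-orbit. But Lemma~\ref{Lem:Stabilizers} in type A gives $\dim \Stab_G(M) = (2l-1)(l-2)+3 \neq (2l-1)(l-2)+4 = \dim \Stab_G(N)$, so $M$ and $N$ sit in distinct $\GL^\bullet(V)$-orbits, a contradiction.

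The main obstacle is guessing the correct embedding $\phi$ in (1); once written down, the remaining verifications are routine linear algebra. Part (2) is then a clean application of dimension counting, whose decisive input is the contrast between Lemma~\ref{Lem:Stabilizers} in types A and D --- precisely the phenomenon producing the counterexample.
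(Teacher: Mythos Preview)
Your proof is correct. Both parts work, though your route for (1) differs from the paper's.

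For part (1), the paper appeals to \cite[Theorem~4.6]{BoRe}: the oriented link patterns associated with $M_\gamma$ and $N_\gamma$ are related by an elementary move in the combinatorial description of $B$-orbit closures of $2$-nilpotent matrices, so $M\degg N$ follows from the translation between $\GL^\bullet(V)$-orbits and $B$-orbits (Section~\ref{Rem:LieTh}). You instead produce an explicit short exact sequence $0\to N_2\to M\to N_1\oplus R\to 0$ witnessing $M\leq_{\Ext}N$, and then invoke $\leq_{\Ext}\Rightarrow\degg$. Your argument is more elementary and self-contained (no external reference needed), at the price of a concrete computation that you only spell out for $l=2$; the general $l$ case follows the same pattern but deserves a line saying so. One small correction: the basis change $u_1=\bar v_1^{(\omega)},\,u_2=\bar v_1^{(\omega)}-\bar v_l^{(\omega)}$ at vertex $\omega$ alone does not identify the quotient with $N_1$; a compatible change at vertex $l$ (and along the intermediate vertices for larger $l$) is also required. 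This is harmless---the quotient and $N_1$ are visibly isomorphic string modules---but the sentence as written overstates what that single change accomplishes.

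For part (2), your argument is precisely the paper's (equal stabilizer dimensions in type~D from Lemma~\ref{Lem:Stabilizers} force equal orbit dimensions, hence no proper closure containment), with the added care of explaining why $M$ and $N$ are not in the same $\G(1)$-orbit via the type~A stabilizer discrepancy. This last step is implicit in the paper but worth making explicit, as you do.
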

\begin{proof}
The representation $M$ corresponds to the so-called oriented link pattern (i.e. an oriented graph defined in \cite{BoRe} representing the part of the coefficient quiver which describes the loop $\gamma$ at vertex $\omega$)
\[\xygraph{ !{<0cm,0cm>;<0.7cm,0cm>:<0cm,1cm>::}
!{(-3.5,0) }*+{\underset{1}\bullet}="1"
!{(-2.5,0) }*+{\underset{2}\bullet}="2"
!{(-1.5,0)}*+{\cdots}="3"
!{(-0.5,0)}*+{\underset{l}\bullet}="4"
!{(0.5,0) }*+{\underset{l^\ast}\bullet}="5"
!{(1.5,0)}*+{\cdots}="6"
!{(2.5,0)}*+{\underset{2^\ast}\bullet}="7"
!{(3.5,0)}*+{\underset{1^\ast}\bullet}="8"
"1":@/^0.75cm/"5"
"4":@/^0.75cm/"8"
}\]
and the representation $N$ corresponds to
\[\xygraph{ !{<0cm,0cm>;<0.7cm,0cm>:<0cm,1cm>::}
!{(-3.5,0) }*+{\underset{1}\bullet}="1"
!{(-2.5,0) }*+{\underset{2}\bullet}="2"
!{(-1.5,0)}*+{\cdots}="3"
!{(-0.5,0)}*+{\underset{l}\bullet}="4"
!{(0.5,0) }*+{\underset{l^\ast}\bullet}="5"
!{(1.5,0)}*+{\cdots}="6"
!{(2.5,0)}*+{\underset{2^\ast}\bullet}="7"
!{(3.5,0)}*+{\underset{1^\ast}\bullet}="8"
"1":@/^0.5cm/"4"
"5":@/^0.5cm/"8"
}.\]

As shown in \cite[Theorem~4.6]{BoRe}, there is a minimal degeneration from $M$ to $N$.
Note that, even though the quiver examined in \cite{BoRe} is different from $\mathcal{Q}$, the fact that in both setups degenerations correspond to Borel-orbit closure relations of $2$-nilpotent matrices (see \cite[Lemma 3.2]{BoRe}  and Section~\ref{Rem:LieTh}) makes sure that the description of the degeneration order given in \cite{BoRe} is valid.

By part (3) of Lemma~\ref{Lem:Stabilizers} the symmetric orbits $\G(1)\cdot M$ and $\G(1)\cdot N$ have the same codimension in $R(1)$. This implies part (2). 
\end{proof}

Proposition \ref{Prop:Counterex} leads to negative answers for Main Question \ref{Question} and Main Question \ref{Question2} for the Seesaw algebra and thus we have the following corollary.
\begin{corollary}
Given a symmetric quiver algebra of finite representation type, the equivalence
$\degg \Longleftrightarrow \degg^\ee$ is not in general true in $R(\A,V)^{\form, \ee}$. 
\end{corollary}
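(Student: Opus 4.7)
The plan is to prove the two parts separately, using very different tools. Part (1) asks for a degeneration in type A; part (2) rules out a symmetric degeneration by a dimension count carried out entirely inside $R(1)$.

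For part (1), my first route would be Zwara's theorem, quoted in Section~\ref{Sec:orders}: since $\A_n$ is representation-finite, $\degg$ coincides with $\leq_{\Hom}$, so it suffices to check $[M,E]\leq [N,E]$ for each indecomposable $E$ of $\A_n$. As $\A_n$ is a string algebra with a fixed dimension vector, the relevant indecomposables form a short finite list, and the Hom-space dimensions can be read off directly from the string/tree combinatorics of $M$ and $N$. A cleaner alternative is to observe that the coefficient quivers of $M$ and $N$ differ by a single local move at the loop vertex $\omega$: the two arcs $1\to l^\ast$ and $l\to 1^\ast$ appearing in $M$ are replaced by $1\to l$ and $l^\ast\to 1^\ast$ in $N$. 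This is precisely the minimal move on oriented link patterns that is shown in \cite{BoRe} to produce a Borel-orbit degeneration of $2$-nilpotent matrices, and via the correspondence between such Borel orbits and $\G^\bullet(V)$-orbits on $R(\A,V)$ (which will be set up in Section~\ref{Rem:LieTh}), this transfers to the type-A degeneration $M\degg N$.

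For part (2), I would rely entirely on Lemma~\ref{Lem:Stabilizers}. Reading the last row of the table, in type D both $\dim\mathrm{Stab}_{G(1)}(M)$ and $\dim\mathrm{Stab}_{G(1)}(N)$ equal $(l-1)(l-2)+2$, so the two symmetric orbits $\G(1)\cdot M$ and $\G(1)\cdot N$ have the same dimension inside $R(1)$. Now $M$ and $N$ are visibly non-isomorphic as $\A$-modules (their coefficient quivers produce different indecomposable decompositions, as witnessed by the different $\gamma$-action), so the orbits $\G\cdot M$ and $\G\cdot N$ are disjoint, and therefore so are their restrictions $\G(1)\cdot M$ and $\G(1)\cdot N$. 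If $N$ lay in $\overline{\G(1)\cdot M}$ then the ``smaller'' orbit would have strictly smaller dimension, contradicting the equality just established. Hence $N\notin\overline{\G(1)\cdot M}$, proving (2).

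The genuine obstacle is part (1): proving $M\degg N$ either requires the enumeration of indecomposables of $\A_n$ together with the Hom-pairings $[M,E]$ and $[N,E]$, or it requires the link-pattern shortcut, which in turn leans on material developed only in Section~\ref{Rem:LieTh}. Part (2) is by contrast essentially a one-line dimension comparison, with all substantive work absorbed into Lemma~\ref{Lem:Stabilizers}. The structural point of the proposition is that the type-A degeneration moves along a one-dimensional direction that, once intersected with $R(1)$, collapses: passing from $\G$ to $\G(1)$ decreases the dimensions of both orbits by the \emph{same} amount, so the gap between the orbit closures that exists in type A disappears in type D.
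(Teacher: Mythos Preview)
Your proposal is correct and follows essentially the same route as the paper: for part (1) you invoke the oriented link pattern move and \cite[Theorem~4.6]{BoRe} (via the Borel-orbit correspondence of Section~\ref{Rem:LieTh}), exactly as the paper does; for part (2) you read off the equal type-D stabilizer dimensions from Lemma~\ref{Lem:Stabilizers} and conclude by the standard orbit-dimension argument, which is precisely the paper's proof. Your additional Hom-order alternative for part (1) is a valid second route, but the paper does not pursue it.
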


\section{Connection with Borel orbits}\label{Rem:LieTh}
The (symmetric) representation theory of the Seesaw algebra can be translated to a particular Lie-theoretic setup  in a related, but more involved way as in Example \ref{Ex:Jordan1}. In this section we recall this connection worked out in \cite{BCIE}.
\subsection{Borel orbits induced by type $A$}
Let $n=2l$ or $n=2l+1$ be a positive integer and let us consider the complex vector space $U=\mathrm{k}^n$.  Let $\ee$ be $1$ or $-1$ and let $\langle-,-\rangle_U$ be a non-degenerate and bilinear $\ee$-form on $U$. Given a linear endomorphism $f$ of $U$ we denote by $f^t$ its adjoint with respect to this form. The general linear group $\GL(\mathrm{k}^n)$ is endowed with the involution $\rho$ given by $g^\rho=(g^t)^{-1}$ and we denote by $\G(\ee)=\GL(\mathrm{k}^n)^\rho$ the set of fixed points. Thus $\G(\ee)$ is the symmetry group of $(U,\langle-,-\rangle_U)$ and it hence coincides with the orthogonal group $O_n$ if $\ee=1$ and with the symplectic group $SP_n$ if $\ee=-1$.  

We denote by  $\mathcal{N}^{(2)}$ the subset of $\Lie(\GL(\mathrm{k}^n))$ of $2$-nilpotent $n\times n$ matrices. We denote by $\Delta:\mathcal{N}^{(2)}\rightarrow \mathcal{N}^{(2)}$ the involution given by $\Delta(A)=-A^t$ and denote by $\mathcal{N}^{(2)}(\ee)=(\mathcal{N}^{(2)})^{\Delta}$ the set of fixed points. We then see that $\mathcal{N}^{(2)}(\ee)\subset \Lie(\G(\ee))$. 
 
Let us fix a maximal isotropic flag $U_1\subset \cdots\subset U_{l}\subset U$ of $U$, i.e. $\textrm{dim }U_i=i$ for all $i$ and $U_l$ is a totally isotropic subspace of maximal dimension. This gives rise to a complete flag $\mathcal{F}_\bullet= U_1\subset \cdots\subset U_{l}\subseteq U_{l}^\perp\subset U_{l-1}^\perp\subset\cdots U_1^{\perp}\subset U$. Let $B\subset GL(\mathrm{k}^n)$ be the stabilizer of $\mathcal{F}_\bullet$ in $\GL(\mathrm{k}^n)$. Given $b\in B$, $v\in U_i$ and $w\in U_i^\perp=:U_{n-i}$ we have
\[
\langle b^\rho(v), w\rangle_U=\langle v, b^{-1}w\rangle_U=0
\]
and hence we see that $b^\rho$ stabilizes the flag $\mathcal{F}_\bullet$. We conclude that $\rho(B)=B$. We denote by $B(\ee)=B^\rho$ the set of fixed points. It is well-known that $B(\ee)\subset \G(\ee)$ is a Borel subgroup of $\G(\ee)$ since it is the stabilizer in $\G(\ee)$ of a maximal isotropic flag \cite[Section~4.1]{Procesi}.

The general linear group $\GL(\mathrm{k}^n)$ acts on $\mathcal{N}^{(2)}$ by conjugation and thus induces an action of the Borel subgroup $B$. The following compatibility relation holds: for every $A\in \mathcal{N}^{(2)}$ and $b\in B$
\[
\Delta(b\cdot \Delta(A))=b^\rho\cdot A.
\]
It follows that the group of fixed points $B(\ee)$ acts on the set of fixed points $\mathcal{N}^{(2)}(\ee)$. It is straightforward to check that the theorem of Magyar-Weyman-Zelevinsky recalled in Remark~\ref{rem:MWZ} applies to the pair $(B,\mathcal{N}^{(2)})$ (here $E$ is the algebra of the $n\times n$ matrices $x$ with the property that $xU_i\subseteq U_i$ for every $i$) and thus we get  that for every fixed point $A\in \mathcal{N}^{(2)}(\ee)$
\[
B\cdot A\cap \mathcal{N}^{(2)}(\ee)=B(\ee)\cdot A. 
\]
\begin{definition}
We say that the orbit-closure relation of $B(\ee)$ on $\mathcal{N}^{(2)}(\ee)$ is \emph{induced by type $A$} if for every $A\in\mathcal{N}^{(2)}(\ee)$ the following holds
\[
\overline{B\cdot A}\cap \mathcal{N}^{(2)}(\ee)=\overline{B(\ee)\cdot A}. 
\]
\end{definition}
In  \cite{BCIE} it is shown that  the problem of determining if the orbit-closure relation of $B(\ee)$ on $\mathcal{N}^{(2)}(\ee)$ is induced by type $A$ is related to the Main Question~\ref{Question2}.  Before recalling this, 
we notice that in \cite{BoRe} combinatorial invariants associated to oriented  link patterns (namely $p_i$ and $q_{i,j}$) are defined and it is shown that they describe the orbit closure relations in type $A$ completely. In case of a positive answer to Main Question 3.1, we therefore know that these invariants provide a handy way to explicitly describe $B(\ee)$-orbit closure relations.

Let us briefly recall the construction of  \cite{BCIE}, for convenience of the reader. We consider the Seesaw algebra $\mathcal{A}_n$ endowed with the symmetry $\sigma$. The $\ee$-form $\langle-,-\rangle_U$ on $U$ descends to a non-degenerate $\ee$-form on $U_i\oplus U/(U_i^\perp)$ for every $i$. We consider the  vector space $V=U\oplus\bigoplus_{i=1}^l(U_i\oplus U/(U_i^\perp))$ and define a $\QQ_0$-grading on it by putting $V_\omega=U$, $V_i=U_i$ and  $V_{i^\ast}=U/(U_i^\perp)$, for $i=1,\cdots l$. We see that the $\ee$-form on $U$ induces an $\ee$-form $\langle-,-\rangle$ in $V$  and the pair $(V,\langle-,-\rangle)$ is an $\ee$-quadratic space for $(\A_n,\sigma)$ (see Section~\ref{Subsec:SymmRep} for the definition). Let us consider the representation variety $R(\A,V)$ and its subvariety $R(\A,V)^{\form, \ee}$ of symmetric representations. Let $\mathcal{B}=\A_n/(\gamma)$ and let $M^0\in R(\mathcal{B},V)$ be the representation given as follows: 
\[
\xymatrix@C=13pt{
U_1\ar@{^{(}->}^{j_1}[r]& U_2\ar@{^{(}->}^{j_2}[r]&\cdots  \ar@{^{(}->}^(.56){j_{l-1}}[r]&U_l\ar@{^{(}->}^(.6){j_{l}}[r]&U\ar@{->>}^(.4){-p_l}[r]&V/(U_l^\perp)\ar@{->>}^(.6){-p_{l-1}}[r]&\cdots \ar@{->>}^(.4){-p_{2}}[r]&\ar@{->>}^(.5){-p_{1}}[r]V/(U_2^\perp)&V/(U_1^\perp)
}
\]
where $j_{i}: U_i\rightarrow U_{i+1}$ denotes the inclusion and  $p_i: U/(U_{i+1}^\perp)\rightarrow V/(U_i^\perp)$ the induced surjection for $i=1,\cdots l$. The main property of $M^0$ is that its stabilizer in $\GL^\bullet(V)$ is isomorphic to $B$. Moreover, $M^0$ is also symmetric, i.e. $M^0\in R(\mathcal{B},V)^{\form, \ee}$ and the stabilizer of $M^0$ in the symmetry group $\G^\bullet(V,\form)$ is isomorphic to $B(\ee)$. 
The quotient map of algebras $\A_n\rightarrow \mathcal{B}$ induces the morphisms of affine varieties $\pi: R(\A,V^\bullet)\rightarrow R(\mathcal{B},V^\bullet)$ and $\pi(\ee): R(\A,V^\bullet)^{\form, \ee}\rightarrow R(\mathcal{B},V^\bullet)^{\form, \ee}$ which forget the loop. We notice that $\pi$ is $\GL^\bullet(V)$-equivariant and $\pi(\ee)$ is $\G^\bullet(V,\form)$-equivariant. Let $Y=GL^\bullet(V)\cdot M^0$ denote the orbit of $M^0$ and let $Y(\ee)=\G^\bullet(V,\form)\cdot M^0$ denote the orbit of $M^0$ by the symmetry group. 
Let $X=\pi^{-1}(Y)$ and $X(\ee)=\pi^{-1}(Y(\ee))$. We denote by $p:X\rightarrow Y$ and $p(\ee):X(\ee)\rightarrow Y(\ee)$ the restriction maps. By construction, $p^{-1}(M^0)=\mathcal{N}^{(2)}$ and $p(\ee)^{-1}(M^0)=\mathcal{N}^{(2)}(\ee)$. As shown in \cite[Lemma~3.2]{BoRe} and  \cite[Lemma~4.4]{BCIE}, one can apply  \cite[Theorem~3.1]{BoRe} in this situation (see \cite{Se}),  and get  isomorphisms of complex varieties: 
\[
\begin{array}{ccc}
X\simeq  \GL^\bullet(V)\times^{B}\mathcal{N}^{(2)}&\textrm{and}&X(\ee)\simeq \G^\bullet(V,\form)\times^{B(\ee)}\mathcal{N}^{(2)}(\ee).
\end{array}
\]
Moreover, again by \cite[Theorem~3.1]{BoRe}, the embedding $j:\mathcal{N}^{(2)}\subset X$ and the embedding $j(\ee):\mathcal{N}^{(2)}(\ee)\subset X(\ee)$ send orbits to orbits and orbit closures to orbit closures which means that  for every $A\in \mathcal{N}^{(2)}$ and $A(\ee)\in  \mathcal{N}^{(2)}(\ee)$  
\begin{eqnarray}\label{Eq:J1}
&&j(B\cdot A)=GL^\bullet(V)\cdot j(A);\quad j(\ee)(B(\ee)\cdot A(\ee))=\G^\bullet(V,\form)\cdot j(\ee)(A(\ee));\\\label{Eq:J2}
&&j(\overline{B\cdot A})=\overline{GL^\bullet(V)\cdot j(A)};\quad j(\ee)(\overline{B(\ee)\cdot A(\ee)})=\overline{\G^\bullet(V,\form)\cdot j(\ee)(A(\ee))}.
\end{eqnarray}
In particular, for every $M\in \pi^{-1}(M^0)$ 
\begin{equation}\label{Eq:StabilizersIso}
\textrm{Stab}_G(M)\simeq \textrm{Stab}_B(M_\gamma).
\end{equation}
We notice that the restriction of $j$ to $\mathcal{N}^{(2)}(\ee)\subset \mathcal{N}^{(2)}$ is  $j(\ee)$. Using this compatibility and the fact that $j$ is injective, \eqref{Eq:J1} and \eqref{Eq:J2} imply at once the following equivalence: for every $A(\ee)\in \mathcal{N}^{(2)}(\ee)$ we have 
\[
\xymatrix{
\overline{B(\ee)\cdot A(\ee)}=\overline{B\cdot A(\ee)}\cap \mathcal{N}^{(2)}(\ee)\ar@{<=>}[d]\\
\overline{ \G^\bullet(V,\form)\cdot j(\ee)(A(\ee))}=\overline{\GL^\bullet(V)\cdot j(A(\ee))}\cap j(\mathcal{N}^{(2)}(\ee))
}
\]
which is the claim we wanted to prove. 
As a consequence of the discussion of this section, Proposition~\ref{Prop:Counterex} implies that
\begin{corollary}
Borel orbit closures of type $D$ (meaning $\ee=1$ and $n$ even) are not induced by type $A$.
\end{corollary}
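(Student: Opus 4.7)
The plan is to combine Proposition~\ref{Prop:Counterex} with the correspondence machinery developed in Section~\ref{Rem:LieTh}, specifically the equivalence boxed just above the corollary statement. The Seesaw representations $M,N\in R(\A,V)^{\form, 1}$ from Section~\ref{Sec:Seesaw} lie in the fiber $\pi(\ee)^{-1}(M^0)$ since their $\gamma$-component is 2-nilpotent and the remaining arrows coincide with the standard inclusions/projections used to define $M^0$. Hence, under the identification $p(\ee)^{-1}(M^0)=\mathcal{N}^{(2)}(1)$, the representations $M$ and $N$ produce two well-defined elements $A_M:=M_\gamma$ and $A_N:=N_\gamma$ of $\mathcal{N}^{(2)}(1)$ (type $D$, since $\ee=1$ and $n=2l$ is even). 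This is the first step I would carry out explicitly.

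Next, I would transport the two conclusions of Proposition~\ref{Prop:Counterex} across equations \eqref{Eq:J1}--\eqref{Eq:J2}. Part (1) of Proposition~\ref{Prop:Counterex} says $N\in\overline{\GL^\bullet(V)\cdot M}$; applying \eqref{Eq:J2} backwards through $j$ yields $A_N\in\overline{B\cdot A_M}$ in $\mathcal{N}^{(2)}$. Since $A_N\in\mathcal{N}^{(2)}(1)$ as well, we obtain
\[
A_N\in\overline{B\cdot A_M}\cap \mathcal{N}^{(2)}(1).
\]
Part (2) of Proposition~\ref{Prop:Counterex} says $N\notin \overline{\G^\bullet(V,\form)\cdot M}$ in the symmetric representation variety; applying \eqref{Eq:J2} in the symmetric version gives $A_N\notin \overline{B(1)\cdot A_M}$. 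Combining the two yields
\[
\overline{B\cdot A_M}\cap \mathcal{N}^{(2)}(1)\;\neq\;\overline{B(1)\cdot A_M},
\]
which is precisely the failure of the ``induced by type $A$'' property at the point $A_M\in \mathcal{N}^{(2)}(1)$.

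I would then conclude by observing that this is exactly the negation of the defining condition for being ``induced by type $A$'' for Borel orbit closures in $\mathcal{N}^{(2)}(1)$, completing the proof. The only non-routine step is verifying that $M$ and $N$ indeed sit in the fiber over $M^0$, i.e.\ that their non-loop components agree (up to change of basis) with the standard inclusions and $(-1)$-times projections defining $M^0$; this is immediate from the explicit description of $M_{a_i}, N_{a_i}, M_{a_i^\ast}, N_{a_i^\ast}$ given in Section~\ref{Sec:Seesaw}. No further calculation is needed since equations \eqref{Eq:J1}--\eqref{Eq:J2} already encode the full compatibility between Borel orbit closures on the nilpotent fiber and $\GL^\bullet$- resp.\ $\G^\bullet$-orbit closures on $X$ and $X(\ee)$. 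The main conceptual point, and the part where I would be careful, is checking that the $\ee=1$, $n$ even choice used in Proposition~\ref{Prop:Counterex} is precisely the type $D$ case in the Borel setup, which follows from the identification $\G(1)=O_n$ for $n=2l$.
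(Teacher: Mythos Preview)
Your proposal is correct and follows essentially the same approach as the paper: the corollary is stated as an immediate consequence of Proposition~\ref{Prop:Counterex} together with the equivalence displayed just before it, and you have simply spelled out the transport along \eqref{Eq:J1}--\eqref{Eq:J2} in more detail than the paper does. The only point worth tightening is that the right-hand side of the displayed equivalence involves $j(\mathcal{N}^{(2)}(\ee))$ rather than all of $R(\A,V)^{\form,\ee}$, so you should note explicitly that $N$ lies in $j(\mathcal{N}^{(2)}(1))$ (which you do, via the fiber identification) before invoking Proposition~\ref{Prop:Counterex}.
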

\begin{example}\label{Ex:Seesaw} 
Let $(n,\ee)$ be either $(5,1)$, $(4,-1)$ or $(4,1)$. Figure~\ref{Fig:A} shows the $B$-orbits and their closures inside $\mathcal{N}^{(2)}(\ee)$ and Figure~\ref{Fig:D} shows the $B(\ee)$-orbits and their closures inside $\mathcal{N}^{(2)}(\ee)$.  We see that in type $B$ and $C$, i.e. $(n,\ee)=(5,1)$ and $(n,\ee)=(4,-1)$, Borel orbit closures are induced by type $A$. This can be proved by using the symmetric Ext-ordering and by constructing explicit curves in the symmetric representation varieties. This example was partially worked out with the help of Francesco Esposito and Giovanna Carnovale during a research visit of the second-named author in Padova. 
\begin{figure}
\begin{center}
\begin{tikzpicture}[descr/.style={fill=white,inner sep=2pt}]
\node (A3) at (-3.9,-4) {$\OO_5$};
\node (A1) at (0,-4) {$\SP_4$};
\node (A2) at (3.6,-4) {$\OO_4$};
\node (D3) at (-3.5,0.3)  {\includegraphics[trim=180 150 170 120,clip,width=110pt,height=230pt]{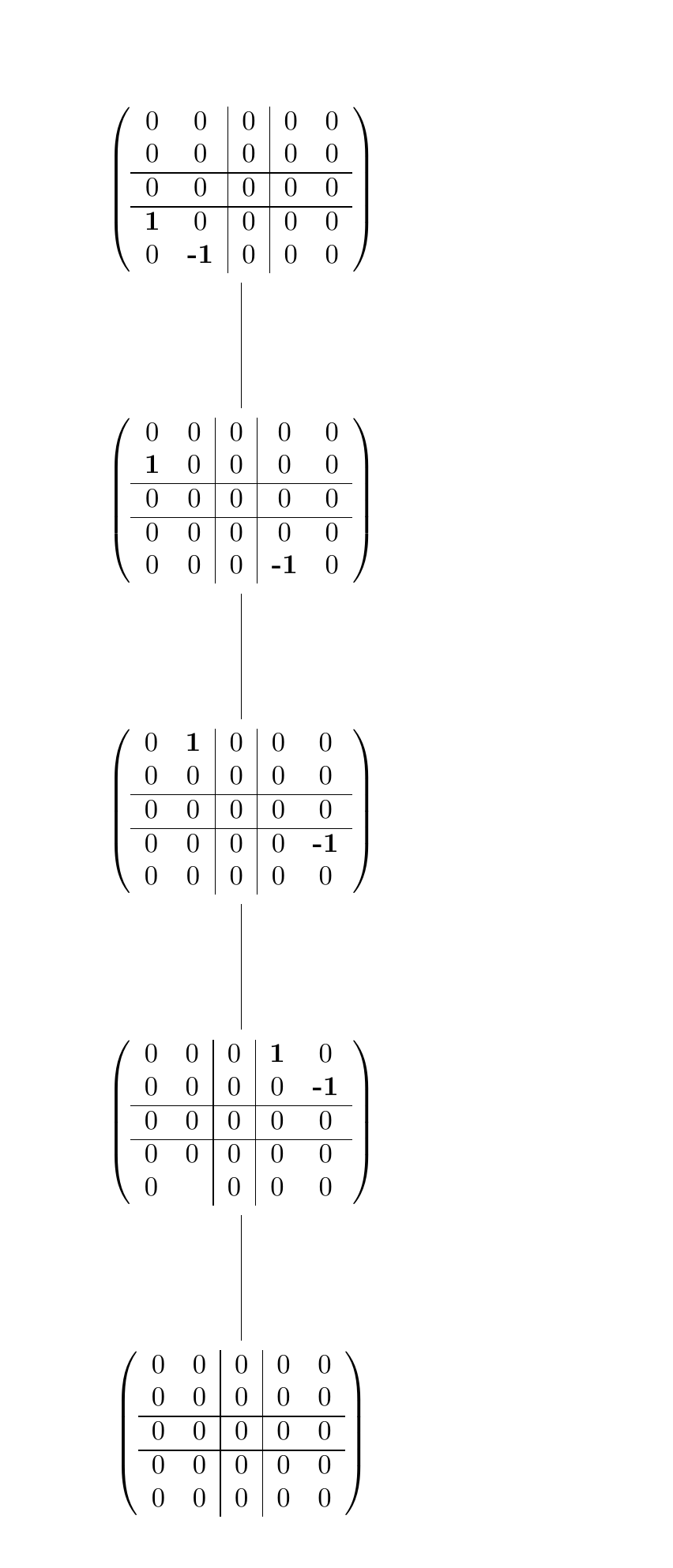}};
\node (D1) at (0.1,0) {\includegraphics[trim=120 210 100 130,clip,width=167pt]{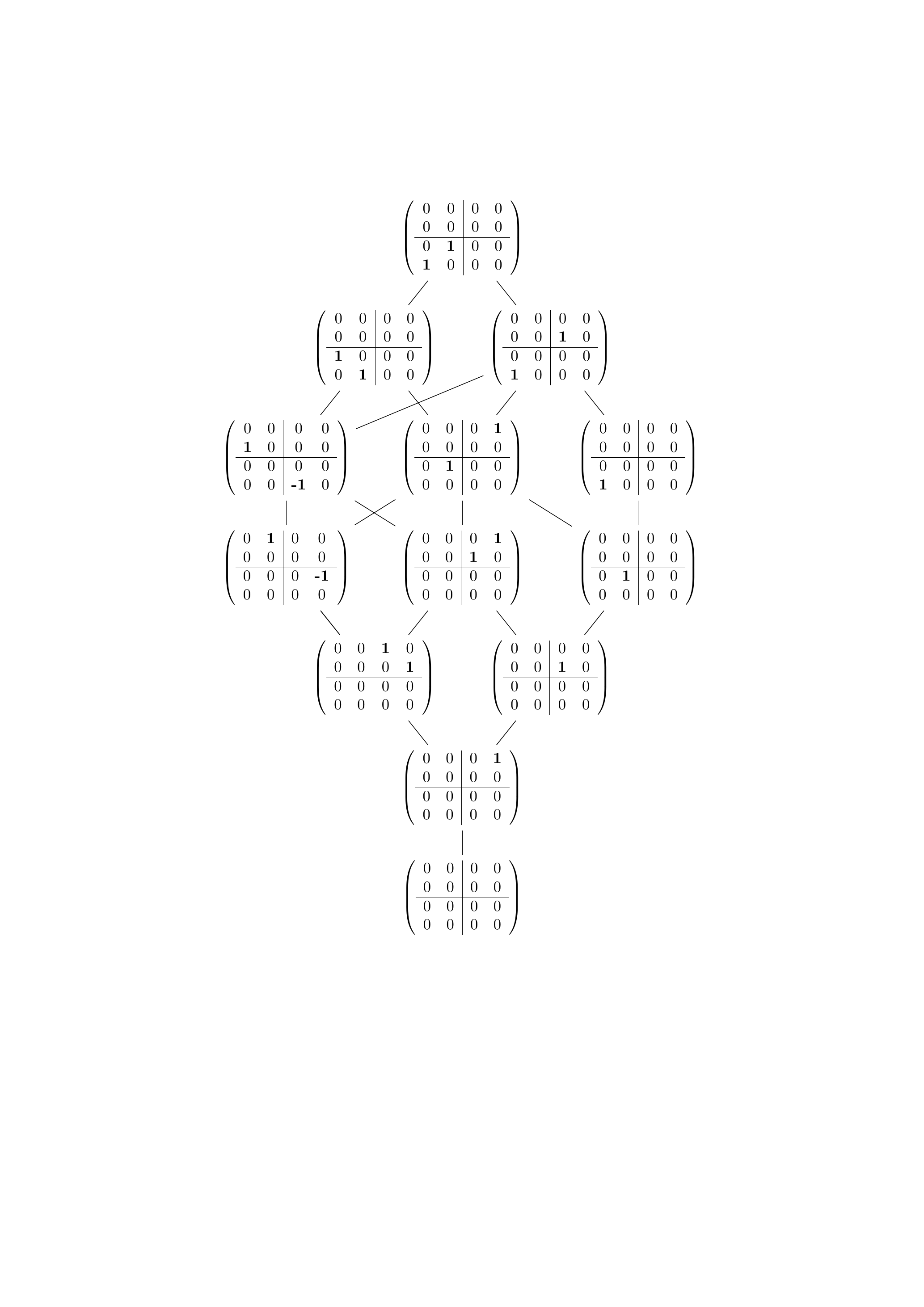}};
\node (D2) at (4,0.3) {\includegraphics[trim=200 225 210 130,clip,width=110pt,height=230pt]{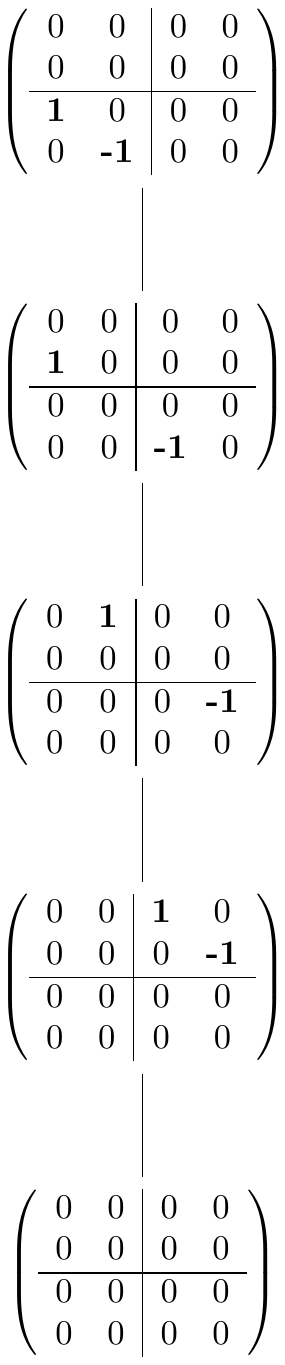}};
  \end{tikzpicture}
\end{center}\caption{$B$-orbits and their closures inside $\mathcal{N}^{(2)}(\ee)$}\label{Fig:A}
\end{figure}

\begin{figure}
\begin{center}
\begin{tikzpicture}[descr/.style={fill=white,inner sep=2pt}]
\node (A3) at (-3.9,-4) {$\OO_5$};
\node (A1) at (0,-4) {$\SP_4$};
\node (A2) at (4.2,-4) {$\OO_4$};
\node (D3) at (-3.5,0.3)  {\includegraphics[trim=180 150 170 120,clip,width=110pt,height=230pt]{SO5_deg.pdf}};
\node (D1) at (0.1,0) {\includegraphics[trim=120 210 100 130,clip,width=167pt]{SP4Matrizen_degs.pdf}};
\node (D2) at (4.2,0.3) {\includegraphics[trim=170 360 205 90,clip,width=90pt,height=180pt]{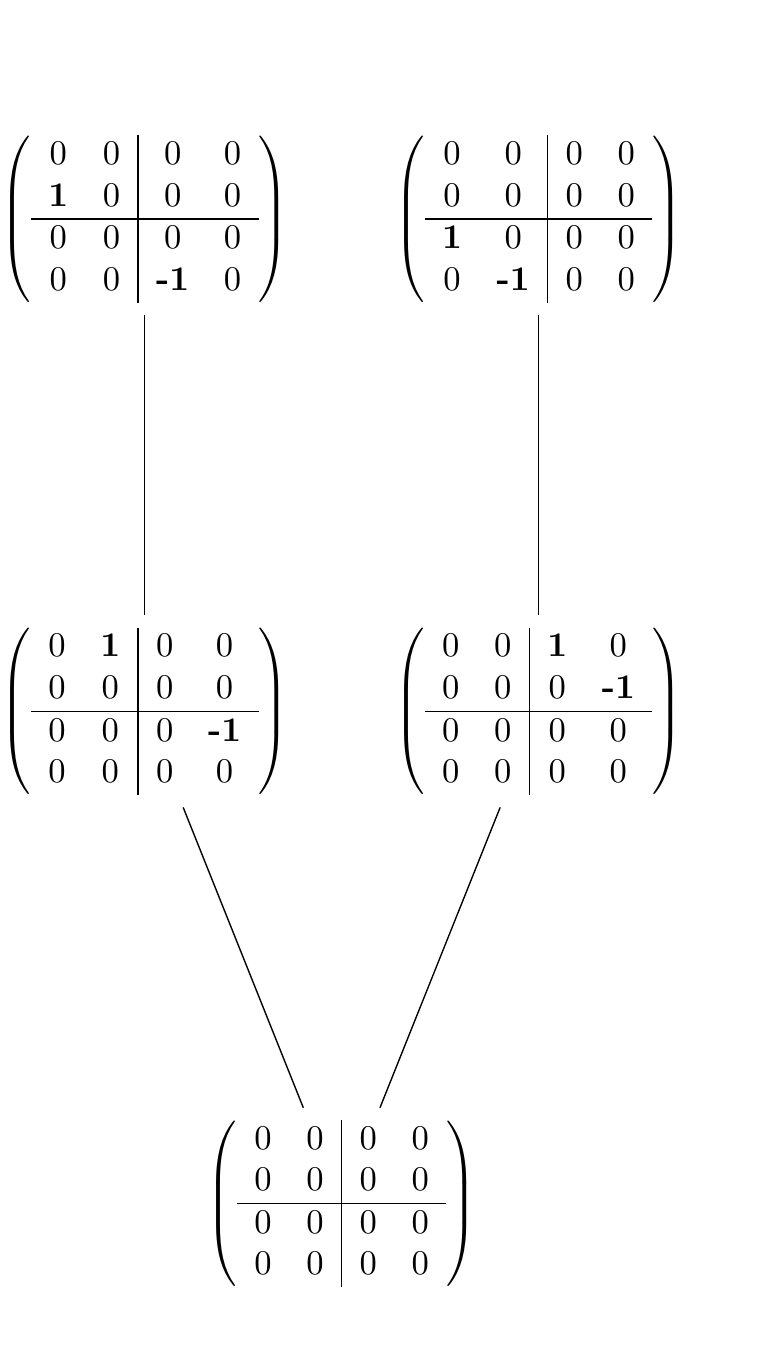}};
  \end{tikzpicture}
\end{center}\caption{$B(\ee)$-orbits and their closures inside $\mathcal{N}^{(2)}(\ee)$}\label{Fig:D}
\end{figure}
\end{example}

\subsection{Interpretation of Proposition~\ref{Prop:Counterex} in terms of root systems}\label{Rem:Papi}
In view of the connection with the Borel orbits, Proposition~\ref{Prop:Counterex} can be proved using the results of \cite{GMP}. 
We are grateful to Paolo Papi for sharing this with us. 
We fix $l\geq 4$ and $n=2l$. We consider the vector space $U=\mathrm{k}^n$ with standard basis $(e_1,\cdots, e_n)$ endowed with the symmetric bilinear form given by $\langle e_i, e_{n+1-j}\rangle=\delta_{i,j}$ for all $i,j=1,\cdots, n$.  For simplicity of notation, for an index $i=1,\cdots, l$ we denote by $i^\ast=n+1-i$ so that so that $\langle e_i, e_{j^\ast}\rangle=\delta_{i,j}=\langle e_{j^\ast}, e_i\rangle$ and $\langle e_i,e_j\rangle=\langle e_{i^\ast},e_{j^\ast}\rangle=0$ for all $i,j=1,\cdots, l$. Given $g\in \GL(\mathrm{k}^n)$ we denote by $g^t$ the adjoint of $g$ with respect to the given form. The orthogonal group $\OO(2l)$ is the group of fixed points for the involution $\rho$ of $\GL(\mathrm{k}^n)$ given by $g\mapsto g^\rho=(g^t)^{-1}$; it consists of two connected components, consisting of matrices of determinant $\pm 1$. The special orthogonal group $\SO(2l)$ is the connected component of the identity and it is a Lie group of type $D_l$. The standard flag $\mathcal{F}_\bullet= U_1\subset\cdots \subset U_n=U$ given by $U_i=\textrm{Span}(e_1,\cdots, e_i)$ is a maximal isotropic flag with respect to the chosen bilinear form and we denote by $\B$ its stabilizer in $\GL(\mathrm{k}^n)$. We notice that  $\B$ is $\rho$-invariant and we denote by $\B(1)\subset B$ the Borel subgroup of $\OO(2l)$ consisting of $\rho$-fixed points. 

We denote by $E_{ij}$ the $2l\times 2l$ matrix having $1$ at place $(i,j)$ and zero elsewhere. The adjoint of $E_{ij}$ with respect to the fixed bilinear form is $E_{j^\ast i^\ast}$ and thus the involution $\Delta$ sends $E_{ij}$ to $-E_{j^\ast i^\ast}$. Let $M_\gamma=E_{l^\ast1}-E_{1^\ast l}$ and $N_\gamma=E_{l1}-E_{1^\ast l^\ast}$.  We see that $M_\gamma$ and $N_\gamma$ are fixed by $\Delta$ and thus are contained in the Lie algebra of $\SO(2l)$. Let $P_{ll^\ast}$ be the elementary matrix obtained by permuting the rows $l$ and $l^\ast$ of the identity matrix. This matrix is contained in $\OO(2l)$ and it has determinant $-1$. It acts as an outer automorphism of $\SO(2l)$. Moreover, $P_{ll^\ast}U_i=U_i$ for $i\neq l$ and $P_{l,l^\ast}U_l=\textrm{Span}(e_1,\cdots, e_{l-1}, e_{l^\ast})=U_l'$. We notice that for every $b\in \B(1)$, $b U_l'=U_l'$ (this is not true for $b\in B$). Putting these observations together we get the following (well-known) fact: for every $b\in \B(1)$
\begin{equation}\label{Eq:PB}
P_{ll^\ast}bP_{ll^\ast}\in \B(1).
\end{equation}
This means that $P_{ll^\ast}$ stabilizes $\B(1)$ (but not $\B$).  Moreover
\begin{equation}\label{Eq:PMN}
P_{ll^\ast}\cdot M_\gamma=P_{ll^\ast}M_\gamma P_{ll^\ast}=N_\gamma. 
\end{equation}
Putting together \eqref{Eq:PB} and \eqref{Eq:PMN} we get 
\[
\B(1)\cdot M_\gamma=P_{ll^\ast}\B(1)P_{ll^\ast}\cdot M_\gamma=P_{ll^\ast}\B(1)\cdot N_\gamma
\]
and thus the $\B(1)$-orbits of $M_\gamma$ and $N_\gamma$ are isomorphic and hence their closures have the same dimension. This implies the second part of Proposition~\ref{Prop:Counterex}. 

To get the first part of Proposition~\ref{Prop:Counterex} (type $A$ situation) we denote by $\ee_i=E_{ii}$ and we consider the Cartan subalgebra $\mathfrak{h}\subset \mathfrak{sl}_n$ of traceless diagonal matrices with basis $(\ee_1-\ee_2,\cdots, \ee_l-\ee_{l^\ast}, \cdots, \ee_{2^\ast}-\ee_{1^\ast})$ and the corresponding root space decomposition $\mathfrak{g}=\mathfrak{sl}_n=\mathfrak{h}\oplus \bigoplus_{\alpha\in \Phi}\mathfrak{g}_\alpha$. Here we choose the simple roots naturally as  $\alpha_1,\cdots, \alpha_{l}=\alpha_{l^\ast},\cdots,\alpha_{1^\ast}$ of $\Phi$ so that $\alpha_i$ corresponds to $\ee_i-\ee_{i+1}$  and $\alpha_{i^\ast}$ corresponds to $\ee_{(i+1)^\ast}-\ee_{i^\ast}$ for $i=1,\cdots, l$. The Dynkin diagram is
\[
\xymatrix{
\circ\ar@{-}_(.0){\al_1}[r]&\circ\ar@{..}_(.0){\al_2}[r]&\circ\ar@{-}_(.0){\al_{l-1}}[r]&\circ\ar@{-}_(.0){\al_l=\al_{l^\ast}}[r]&\circ\ar@{..}_(.0){\al_{(l-1)^\ast}}[r]&\circ\ar@{-}_(.0){\al_{2^\ast}}_(1.0){\al_{1^\ast}}[r]&\circ
}
\]
The remaining roots are $\pm \alpha_{ij}$ where $\alpha_{ij}=\alpha_i+\cdots+\alpha_j$ for $1\leq i\leq j\leq n-1$ (with the convention that $\alpha_{n-i}:=\alpha_{i^\ast}$ for $i=1,\cdots, l$) and $\Phi$ is a root system  of type $A_{n-1}$. Obviously,  $\mathfrak{g}_{\alpha_{i(j-1)}}=\textrm{Span}(E_{ij})$ and  $\mathfrak{g}_{-\alpha_{i(j-1)}}=\textrm{Span}(E_{ji})$. We denote by $x_\alpha$ the generator $E_{ij}$ of $\mathfrak{g}_\alpha$. Thus, 
\[
\begin{array}{cc}
M_\gamma=x_{-\alpha_{1l}}-x_{-\alpha_{l^\ast 1^\ast}}&
N_\gamma=x_{-\alpha_{1(l-1)}}-x_{-\alpha_{(l-1)^\ast1^\ast}}.
\end{array}
\]
We consider the strongly orthogonal sets of roots (see \cite{GMP}) $\mathcal{M}$ and $\mathcal{N}$ corresponding to $M_\gamma$ and $N_\gamma$ respectively: 
\[
\begin{array}{cc}
\mathcal{M}=(-\alpha_{1l}, -\alpha_{l^\ast 1^\ast})&
\mathcal{N}=(-\alpha_{1(l-1)}, -\alpha_{(l-1)^\ast1^\ast})
\end{array}
\]
and their affine analogue considered in \cite{GMP}
\[
\begin{array}{cc}
\widehat{\mathcal{M}}=(-\alpha_{1l}-\delta, -\alpha_{l^\ast 1^\ast}-\delta)&
\widehat{\mathcal{N}}=(-\alpha_{1(l-1)}-\delta, -\alpha_{(l-1)^\ast1^\ast}-\delta)
\end{array}
\]
where $\delta$ denotes the minimal positive imaginary root of the affine root system of type $A_{n-1}$. Following \cite{GMP} we consider the corresponding elements of the affine Weyl group
\[
\begin{array}{cc}
\sigma_{\widehat{\mathcal{M}}}=\sigma_{-\alpha_{1l}-\delta}\sigma_{-\alpha_{l^\ast 1^\ast}-\delta}&
\sigma_{\widehat{\mathcal{N}}}=\sigma_{-\alpha_{1(l-1)}-\delta}\sigma_{-\alpha_{(l-1)^\ast 1^\ast}-\delta}
\end{array}
\]
where $\sigma_\alpha$ denotes the reflection through the affine root $\alpha$. Let us denote $s_i=\sigma_{\alpha_i}$ the simple reflection through the simple root $\alpha_i$. Since $s_l(-\alpha_{1(l-1)}-\delta)=-\alpha_{1l}-\delta$ and $s_l(-\alpha_{(l-1)^\ast 1^\ast}-\delta)=-\alpha_{l^\ast 1^\ast}-\delta$, we get
\begin{equation}\label{Eq:SigmaMSigmaN}
s_l\sigma_{\widehat{\mathcal{N}}}s_l=\sigma_{\widehat{M}}.
\end{equation}
To compute the lenghts  $\ell(\sigma_{\widehat{M}})$ and $\ell(\sigma_{\widehat{\mathcal{N}}})$ we use \cite[Theorem~1]{GMP} and Lemma~\ref{Lem:Stabilizers}: from \cite[Theorem~1]{GMP} we get 
\[
\begin{array}{cc}
\ell(\sigma_{\widehat{M}})=2\textrm{dim} B\cdot M_\gamma-|\mathcal{M}|,&
\ell(\sigma_{\widehat{N}})=2\textrm{dim} B\cdot N_\gamma-|\mathcal{N}|.
\end{array}
\]
By Lemma~\ref{Lem:Stabilizers},  \eqref{Eq:StabilizersIso} and the fact that $\dim B=l(2l+1)$ we get:
\[
\begin{array}{cc}
\dim B\cdot M_\gamma=6(l-1)+1,&\dim B\cdot N_\gamma=6(l-1)
\end{array}
\]
from which it follows that 
\[
\begin{array}{cc}
\ell(\sigma_{\widehat{M}})=12(l-1),&
\ell(\sigma_{\widehat{N}})=12(l-1)-2=\ell(\sigma_{\widehat{M}})-2.
\end{array}
\]
Thus, \eqref{Eq:SigmaMSigmaN} shows that $\sigma_{\widehat{\mathcal{N}}}$ is a (reduced) subexpression of a reduced expression for $\sigma_{\widehat{\mathcal{M}}}$, hence $\sigma_{\widehat{\mathcal{N}}}<\sigma_{\widehat{\mathcal{M}}}$
in the Bruhat order.  By \cite[Theorem~1]{GMP} this implies that
$BN\subset \overline{BM}$
which is another proof of the first part of Proposition~\ref{Prop:Counterex}. 
\begin{remark}
The matrices $M_\gamma$ and $N_\gamma$ belong to the Lie algebra of $\SO(2l)$ but also of $\SP(2l)$ and (by adding a row and a column of zeros) of $\SO(2l+1)$. Thus one can compute the dimensions of the orbits of those two elements for the Borel $B$ of the different groups of type $A$, $B$, $C$ and $D$. They are shown in Table~\ref{Table:BOrbitDimension}, which can be calculated in a similar manner as in Lemma~\ref{Lem:Stabilizers}.
\begin{table}
\begin{tabular}{|c|c|c|c|c|}
\hline
&$\dim\mathrm{Stab}_B(M_\gamma)$&$\dim B.M_\gamma $& $\dim\mathrm{Stab}_B(N_\gamma)$ & $\dim B.N_\gamma$\\\hline
Type A&(2l-1)(l-2)+3&6(l-1)+1  &(2l-1)(l-2)+4& 6(l-1)  \\\hline
Type B&l(l-2)+2&3(l-1)+1 &l(l-2)+3& 3(l-1)  \\\hline
Type C&l(l-2)+1&3(l-1)+2 &l(l-2)+2&  3(l-1)+1 \\\hline
Type D&(l-1)(l-2)+2&3(l-1)-1  &(l-1)(l-2)+2&  3(l-1)-1 \\\hline
\end{tabular}
\caption{Dimension of the Borel orbits and stabilizers}\label{Table:BOrbitDimension}
\end{table}
It is worth noticing that in types $A$, $B$ and $C$, $BN_\gamma\subset\overline{BM_\gamma}$, but not in type $D$.
\end{remark}
It is interesting to reinterprete the $B(1)$-orbits of $M$ and $N$ in terms of root systems. Let us consider the following elements of the root lattice $\mathbb{Z}\Phi$
\[
\beta_1=\alpha_1+\alpha_{1^\ast}, \beta_2=\alpha_2+\alpha_{2^\ast},\cdots, \beta_{l-1}=\alpha_{l-1}+\alpha_{l+1}, \beta_l=\alpha_{l-1}+\alpha_{l+1}+2\alpha_l.
\]
It is straightforward to check that $(\beta_1,\cdots, \beta_l)$ form a a set of simple roots for a root system of type $D_l$ and the corresponding Dynkin diagram is 
\[
\xymatrix@R=3pt{
&&&\circ\\
\circ\ar@{-}_(.0){\beta_1}[r]&\circ\ar@{..}_(.0){\beta_2}[r]&\circ\ar@{-}_(.0){\beta_{l-2}}_(1.0){\beta_{l-1}}[ur]\ar@{-}_(1.0){\beta_l}[dr]&\\
&&&\circ
}
\]
With respect to this root system, $M$ and $N$ become root vectors:
\[
\begin{array}{cc}
M=x_{\beta_1+\beta_2+\cdots+\beta_{l-2}+\beta_{l-1}},& N=x_{\beta_1+\beta_2+\cdots+\beta_{l-2}+\beta_{l}}
\end{array}
\]
which can be represented in the Dynkin diagram as follows:	
\[
\xymatrix@R=3pt{
&&&\circ\\
\circ\ar@{..}@/^1pc/|M[rrru]\ar@{..}@/_1pc/|N[rrrd]\ar@{-}_(.0){\beta_1}[r]&\circ\ar@{..}_(.0){\beta_2}[r]&\circ\ar@{-}_(.0){\beta_{l-2}}_(1.0){\beta_{l-1}}[ur]\ar@{-}_(1.0){\beta_l}[dr]&\\
&&&\circ
}
\]
In particular, one sees  that the outer automorphism $P_{ll^\ast}$  of $SO(2l)$ provides an isomorphism between the two $B(1)$-orbits of $M$ and $N$ which therefore have the same dimension.

\section{Conjectures}\label{Sec:General}

We have shown  that Main Question~\ref{Question2} is not in general true. Under which circumstances it is (not) true, however, is still an open question.  

\begin{conjecture}\label{Conj} Let $\A$ be the Seesaw algebra. Main Question \ref{Question2} has a positive answer restricted to any irreducible component of the symmetric representation variety $R(\A,V)^{\form, \ee}$.  In other words if $I$ is an irreducible component of $R(\A,V)^{\form, \ee}$ and $M,N\in I$ then $\xymatrix@1{M\degg^\ee N \ar@{<=>}[r]& M\degg N}$.
\end{conjecture}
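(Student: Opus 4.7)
The plan is to exploit the observation, made explicit in Section~\ref{Rem:Papi}, that the failure of Main Question~\ref{Question2} in type $D$ is driven entirely by the outer automorphism $P_{ll^\ast}$ of $\SO(2l)$: this element lies in $\G^\bullet(V,\form)$ but outside its identity component, and it identifies the $\B(1)$-orbits of the counterexamples $M$ and $N$. This strongly suggests that $R(\A,V)^{\form,1}$ is reducible in type $D$, with irreducible components permuted by $P_{ll^\ast}$, and that the pair $(M,N)$ of Proposition~\ref{Prop:Counterex} lies across two distinct components $\mathcal{I}_M \ne \mathcal{I}_N$. Once this is established, the conjecture becomes an assertion that can be attacked component by component, and the obstruction identified in Section~\ref{Rem:Papi} is by construction absent.

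First, I would describe the irreducible components of $R(\A,V)^{\form,1}$ explicitly. Using the $\G^\bullet$-equivariant forgetful map $\pi(1): R(\A,V)^{\form,1} \to R(\mathcal{B},V)^{\form,1}$ together with the isomorphism $X(1) \simeq \G^\bullet(V,\form) \times^{\B(1)} \mathcal{N}^{(2)}(1)$ recalled in Section~\ref{Rem:LieTh}, identifying components reduces to analysing those of $\mathcal{N}^{(2)}(1)$ modulo the component group $\pi_0(\G^\bullet(V,\form)) = \mathbf{Z}/2$. In the root-theoretic dictionary of Section~\ref{Rem:Papi}, this component group swaps the two horns $\beta_{l-1}, \beta_l$ of the $D_l$ Dynkin diagram, and the explicit root-vector description there places $M$ on the $\beta_{l-1}$-horn and $N$ on the $\beta_l$-horn, confirming $\mathcal{I}_M \ne \mathcal{I}_N$.

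The core step is then to prove that if $M', N' \in R(\A,V)^{\form,1}$ belong to a common irreducible component $\mathcal{I}$ and $M' \degg N'$, then $M' \degg^\varepsilon N'$. One route is to combine Zwara's equivalence $\degg \Leftrightarrow \leq_{\Hom}$ (available because $\A$ is representation-finite) with a symmetric refinement of Bongartz's cancellation trick, in the hope of lifting a Hom-order chain from $M'$ to $N'$ to a symmetric Ext-chain witnessing $M' \leq_{\Ext}^\varepsilon N'$. Equivalently, via \eqref{Eq:J1}--\eqref{Eq:J2} and \cite[Theorem~1]{GMP}, the problem translates into a Bruhat-order statement: if $\sigma_{\widehat{\mathcal{M}}}, \sigma_{\widehat{\mathcal{N}}}$ arise from strongly orthogonal sets whose terminal roots lie on the same horn of the $D_l$-diagram, then any type-$A$ reduced subexpression relating them can be chosen so as to avoid the simple reflection which would transpose the two horns.

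The hard part will be this last step. Neither the Hom-order nor the non-symmetric Ext-chain comes with any guarantee that intermediate representations stay symmetric or remain inside a fixed component, and Proposition~\ref{Prop:Counterex} shows that an unrestricted lifting is hopeless. A constructive argument most likely requires a case analysis via the oriented link patterns of \cite{BoRe}, refining the combinatorial invariants $p_i$ and $q_{i,j}$ by an extra datum that records the component of $\mathcal{I}$, and then checking that every covering relation in the type-$A$ degeneration order which respects this datum is realised by a single symmetric Ext-sequence. Given the string-algebra nature of $\A$ and the essentially complete combinatorial picture in types $B$ and $C$ indicated by Example~\ref{Ex:Seesaw}, I expect such an analysis to be feasible, if technical.
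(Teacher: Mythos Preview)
The statement you are attempting to prove is labelled \emph{Conjecture}~\ref{Conj} in the paper, and the paper offers no proof of it: Section~\ref{Sec:General} presents it explicitly as an open problem, motivated only by the observation that in type $D$ ``there are two dense orbits'' and that the counterexample of Proposition~\ref{Prop:Counterex} is spread across them. There is therefore no proof in the paper to compare your proposal against.

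Your proposal is not a proof either, and you say so yourself: the core step --- showing that a type-$A$ degeneration between two $\ee$-representations lying in the same irreducible component can always be realised symmetrically --- is deferred to a ``feasible, if technical'' case analysis that you have not carried out. The preparatory steps (identifying the components of $R(\A,V)^{\form,1}$ via the associated-bundle description $X(1)\simeq \G^\bullet(V,\form)\times^{\B(1)}\mathcal{N}^{(2)}(1)$ and the component group $\pi_0(\G^\bullet(V,\form))$, then checking that $M$ and $N$ land in different components) are plausible and consistent with the paper's hints, but they too are sketched rather than established; in particular, passing from ``components of $\mathcal{N}^{(2)}(1)$ modulo $\pi_0$'' to components of $R(\A,V)^{\form,1}$ requires more care than the one sentence you devote to it, since $X(1)$ is only an open piece of the full symmetric representation variety. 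What you have written is a sensible research plan for attacking the conjecture along the lines the paper itself suggests, not a proof; the essential difficulty --- producing symmetric Ext-chains or explicit one-parameter degenerations within a fixed component --- remains entirely open, exactly as it does in the paper.
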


In type $C$ the symmetric representation varieties are irreducible, whereas in type $B$ we are not sure at the moment. In type $D$ there are two dense orbits.
\begin{conjecture}\label{ConjB}
In type  $C$, Borel orbit closures are induced by type $A$. 
\end{conjecture}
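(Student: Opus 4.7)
The plan is to leverage the equivalence of Section~\ref{Rem:LieTh} between $B$-orbit closures on $\mathcal{N}^{(2)}(-1)$ and $\GL^\bullet(V)$-orbit closures on $R(\A_n,V)^{\form,-1}$ (and similarly for $B(-1)$ and $\G^\bullet(V,\form)$), which reduces Conjecture~\ref{ConjB} to the symplectic instance of Main Question~\ref{Question2} for the Seesaw algebra. Since $\A_n$ is representation-finite, Zwara's theorem identifies $\degg$ with $\leq_{\Hom}$, and by \cite{BoRe, BCIE} the isomorphism classes of the relevant symmetric representations are indexed combinatorially by $\sigma$-symmetric oriented link patterns carrying the symplectic decoration.

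First I would enumerate, via these link patterns, all pairs $(M,N)$ of symplectic symmetric representations for which $M\degg N$; by Zwara this is a finite, purely combinatorial computation using the invariants $p_i, q_{i,j}$ of \cite{BoRe}. For each such pair it suffices to produce a chain witnessing $M\leq^{-1}_{\Ext} N$, since this already forces both $M\degg^{-1}N$ and the conclusion of Main Question~\ref{Question2}. I would proceed by induction on the codimension of the degeneration: at each minimal step one seeks an isotropic subrepresentation $U\subset M$ whose short exact sequence $0\to U\to M\to M/U\to 0$ degenerates to the split object $U\oplus\nabla U\oplus U^{\perp}/U$, producing a symmetric intermediate strictly closer to $N$ in $\leq_{\Hom}$. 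The existence of such a $U$ at each minimal step should be readable off the link-pattern picture, using the fact that arcs in a symplectic symmetric pattern pair vertices of type $i$ with vertices of type $j^{\ast}$, giving canonical $\sigma$-compatible isotropic subspaces.

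The hard part is to exclude the type~$D$ phenomenon exhibited in Proposition~\ref{Prop:Counterex}: there the obstruction arose from the involution $P_{l\,l^{\ast}}\in\OO(2l)\setminus\SO(2l)$, which normalises $B(1)$ and exchanges the two candidate symmetric representations, fusing two $B(1)$-orbits of the same dimension into a single $B$-orbit. No analogous element exists inside $\SP(2l)$, which is connected, so the pathology should not occur; but turning this heuristic into a proof is the main obstacle. I would do so by importing the affine-root-theoretic viewpoint of Subsection~\ref{Rem:Papi}: establish a type~$C$ analogue of \cite[Theorem~1]{GMP} identifying $B(-1)$-orbit dimensions with lengths in the affine Weyl group of type $\widetilde{C}_l$, and show that under the folding $A_{2l-1}\to C_l$ of (affine) root systems the Bruhat order on the elements $\sigma_{\widehat{\mathcal{M}}}$ associated with symplectic strongly orthogonal sets is compatible with its type~$A$ ancestor. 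The technical heart, and where new ideas are most likely needed, is the verification of this compatibility at the folded long root, since it is exactly there that the parallel argument collapses in type~$D$.
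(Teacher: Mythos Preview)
The statement you are attempting to prove is Conjecture~\ref{ConjB}, and the paper does \emph{not} provide a proof of it. It appears in Section~\ref{Sec:General} (``Conjectures'') precisely because it is left open; the only supporting evidence the paper offers is the remark that the conjecture ``is also motivated by \cite[Corollary~8.4.9]{BB}'', together with the small-rank verification in Example~\ref{Ex:Seesaw} (where the $\SP_4$ case is checked by hand using the symmetric Ext-order and explicit curves). There is therefore no ``paper's own proof'' against which to compare your proposal.

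That said, your plan of attack is aligned with the infrastructure the paper sets up: the reduction via Section~\ref{Rem:LieTh} to the symplectic instance of Main Question~\ref{Question2} for the Seesaw algebra is exactly the intended translation, and attempting to produce chains witnessing $\leq_{\Ext}^{-1}$ is the method the authors report using for the small cases in Example~\ref{Ex:Seesaw}. Your heuristic that the type~$D$ obstruction stems from the disconnectedness of $\OO(2l)$ (via $P_{ll^\ast}$) and has no analogue in the connected group $\SP(2l)$ matches the paper's discussion in Subsection~\ref{Rem:Papi}. However, you yourself flag that the folding argument at the long root is where ``new ideas are most likely needed'', and indeed nothing in the paper supplies those ideas. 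Your proposal is a reasonable research programme, not a proof; it does not close the gap, and the paper does not claim to either.
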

This conjecture is also motivated by \cite[Corollary~8.4.9]{BB}.

\subsection*{Acknowledgements} 
We thank Francesco Esposito and Giovanna Carnovale for many useful discussions, in particular concerning Example~\ref{Ex:Seesaw}. We thank Martin Bender for conversations about the contents of this article. We thank Paolo Papi for insights about the root-theoretic interpretation of our results shown in Section~\ref{Rem:Papi}. We thank Salvatore Stella for advises concerning affine Weyl groups.  This work was sponsored by DFG Forschungsstipendium BO 5359/1-1, DFG Rückkehrstipendium BO 5359/3-1  and DFG Sachbeihilfe BO 5359/2-1.

\bibliographystyle{amsplain}

\begin{thebibliography}{99}
\bibitem{AbeasisDelFraEquioriented}
Abeasis, S., Del~Fra, A.: Degenerations for the representations of an equioriented quiver of type $A_m$, Bull.~Un.~Mat.~Ital.~Suppl., n.~2, 157-171  (1980).
\bibitem{AbeasisDelFra}
Abeasis, S., Del~Fra, A.:  Degenerations for the representations of a quiver of type $A_m$, J.~Algebra \textbf{93}, 376-412 (1982).
\bibitem{ASS}
Assem, I., Simson, D., Skowronski, A.:  Elements of the representation theory of associative algebras. Vol. 1. LMS Student Texts \textbf{65}. Cambridge University Press, Cambridge (2006).
\bibitem{BB}
Bj\"{o}rner, A.,  Brenti, F.: Combinatorics of Coxeter groups. Graduate Texts in Mathematics \textbf{231} (2005).
\bibitem{Bongartz}
Bongartz, K.: On Degenerations and Extensions of
Finite Dimensional Modules. Adv. Math. \textbf{121} 245--287 (1996).
\bibitem{BCI}
Boos, M., Cerulli~Irelli, G.: On degenerations and extensions of symplectic and orthogonal quiver representations. Preprint  (2021). \url{arXiv:2107.10559}
\bibitem{BCIE}
Boos, M., Cerulli~Irelli, G., Esposito, F.: Parabolic orbits of 2-nilpotent elements for classical groups. J. Lie Th. \textbf{29} (2019).
\bibitem{BoRe}
Boos, M., Reineke, M.: B-orbits of 2-nilpotent matrices and generalizations. Progress in
  Mathematics \textbf{295}, 147--166 (2011). \url{arXiv:1004.1996}
\bibitem{CB}
Crawley-Boevey, W.: Maps between representations of zero-relation algebras.  J. Algebra \textbf{126}, 259--263 (1989).
\bibitem{DW}
H.~Derksen and J.~Weyman, \emph{Generalized quivers associated to reductive groups}. Colloq. Math. \textbf{94} (2002), no.~ 2, 151-173.
\bibitem{GMP}
Gandini, J., Moseneder Frajria, P., Papi, P.: Nilpotent orbits of height 2 and involutions in the affine Weyl group. Indag. Math. \textbf{31}, 568-594 (2020).
\bibitem{Ger}
Gerstenhaber, M.: On nilalgebras and linear varieties of nilpotent matrices. III. Ann. of Math. \textbf{70}, 167--205 (1959).
\bibitem{Jor}
M.~E.~Camille Jordan, M. E.: Trait{\'e} des substitutions et des {\'e}quations
 alg{\'e}briques. Les Grands Classiques Gauthier-Villars. {\'E}ditions Jacques Gabay, Sceaux, (1989). Reprint of the 1870 original.
\bibitem{KP}
Kraft, H.,  Procesi, C.: On the geometry of conjugacy classes in classical groupes. Comm. Math. Helvetici \textbf{57} 539--602 (1982).
\bibitem{MWZ}
Magyar, P., Weyman, J., Zelevinsky, A.: Symplectic multiple flag varieties of finite type. J. Algebra \textbf{230}, 245--265 (2000).
\bibitem{Procesi}
Procesi, C.: Lie Groups-An approach through invariants and representations. Springer. UTX (2007).
\bibitem{Riedtmann}
Riedtmann, C.: Degenerations for representations of quivers with relations. Ann. Sc. ENS, \textbf{19}, 275-301 (1986).
\bibitem{Ringel} Ringel, C. M.: Exceptional modules are tree modules. Lin. Alg. Appl \textbf{275-276}, 471--493 (1998). 
\bibitem{Se} Serre, J.-P.: Espaces fibr\'es alg\'ebriques,  S\'eminaire Claude Chevalley \textbf{3}, 1--37 (1958).
\bibitem{Zwara} Zwara, G.: Degenerations for modules over representation-finite algebras. Proc. AMS \textbf{127}, 1313-1322 (1999).
\end{thebibliography}

\end{document}